\chardef\bslash=`\\
\newtheorem{theorem}[subsection]{Theorem}
\newtheorem{thm}[subsection]{Theorem}
\newtheorem{lemma}[subsection]{Lemma}
\newtheorem{prop}[subsection]{Proposition}
\newtheorem{defn}[subsection]{Definition}
\theoremstyle{remark}
\newtheorem{rem}[subsection]{Remark}
\numberwithin{equation}{subsection}
\newif\iffinalrun
  \newcommand{\need}[1]{}
  \newcommand{\mar}[1]{}
  \newcommand{\need}[1]{{\tiny *** #1}}
  \newcommand{\mar}[1]{\marginpar{\raggedright\tiny  #1}}\fi
\renewcommand\mathbb{\mathbf}
\newcommand{\suffi}{{\operatorname{suffices}}}
\newcommand{\rec}{{\operatorname{rec}}}
\newcommand{\avoid}{\mathrm{avoid}} 
\newcommand{\barepsilon}{{\overline{\epsilon}}}
\newcommand{\rbar}{\overline{r}}
\renewcommand{\ell}{l}
\def\PSL{\mathrm{PSL}}
\def\PGL{\mathrm{PGL}}
\newcommand{\ad}{\operatorname{ad}}
\def\iso{\buildrel \sim \over \longrightarrow}
\newcommand{\G}{\cG}
\newcommand{\A}{\mathbf{A}}
\newcommand{\bA}{\ensuremath{\mathbf{A}}}
\newcommand{\CC}{{\mathbb C}}
\newcommand{\C}{\CC}
\newcommand{\bC}{\ensuremath{\mathbf{C}}}
\newcommand{\F}{\FF}
\newcommand{\FF}{{\mathbb F}}
\newcommand{\bQ}{\ensuremath{\mathbf{Q}}}
\newcommand{\Q}{\QQ}
\newcommand{\QQ}{{\mathbb Q}}
\newcommand{\Z}{\ZZ}
\newcommand{\ZZ}{{\mathbb Z}}
\newcommand{\cD}{{\mathcal D}}
\newcommand{\cG}{{\mathcal G}}
\newcommand{\cL}{{\mathcal L}}
\newcommand{\cM}{{\mathcal M}}
\newcommand{\cO}{{\mathcal O}}
\newcommand{\cR}{{\mathcal R}}
\newcommand{\CX}{{\mathcal{X}}}
\newcommand{\cY}{{\mathcal{Y}}}
\newcommand{\frakp}{\mathfrak{p}}
\newcommand{\ga}{\mathfrak{a}}
\newcommand{\gp}{\mathfrak{p}}
\newcommand{\frakl}{\mathfrak{l}}
\newcommand{\Fbar}{\overline{\F}}
\newcommand{\Qbar}{\overline{\Q}}
\newcommand{\Fp}{\F_p}
\newcommand{\Fl}{\F_{\ell}}
\newcommand{\Flbar}{\Fbar_{\ell}}
\newcommand{\Ql}{\Q_{\ell}}
\newcommand{\Qp}{\Q_p}
\newcommand{\Qlbar}{\Qbar_{\ell}}
\newcommand{\Qpbar}{\Qbar_p}
\DeclareMathOperator{\Aut}{Aut}
\DeclareMathOperator{\End}{End}
\DeclareMathOperator{\Gal}{Gal}
\newcommand{\GL}{\mathrm{GL}}
\newcommand{\GSp}{\mathrm{GSp}}
\DeclareMathOperator{\Hom}{Hom}
\DeclareMathOperator{\Ind}{Ind}
\DeclareMathOperator{\SL}{SL}
\DeclareMathOperator{\WD}{WD}
\newcommand{\Frob}{\mathrm{Frob}}
\newcommand{\HT}{\mathrm{HT}}
\newcommand{\nr}{\mathrm{nr}}
\newcommand{\rhobar}{\overline{\rho}}
\newcommand{\Res}{\operatorname{Res}}
\newcommand{\doubleslash}{/\kern-0.2em{/}}
\newcommand{\Fss}{\mathrm{F\mbox{-}ss}}
\title[Monodromy over CM fields]{Monodromy for some rank two Galois representations over CM fields}
\author[P.~Allen]{Patrick B. Allen} \email{patrick.allen@mcgill.ca}
 \address{Department of Mathematics and Statistics,
McGill University,
Montreal, Quebec H3A 0B9, Canada}
\author[J.~Newton]{James Newton} \email{j.newton@kcl.ac.uk}
\address{Department of Mathematics, King's College London, London WC2R 2LS, UK}
\thanks{P.A. \ was supported in part by Simons Foundation Collaboration Grant for Mathematicians 527275.}
\begin{document}

\begin{abstract}
We investigate local-global compatibility for cuspidal automorphic 
representations $\pi$ 
for $\operatorname{GL}_2$ over CM fields that are regular algebraic of weight $0$. 
We prove that 
for a Dirichlet density one set of primes $l$ and any $\iota : \Qbar_l \iso 
\C$, 
the $l$-adic Galois representation attached to $\pi$ and $\iota$ has nontrivial monodromy at any $v\nmid l$ in $F$ at which $\pi$ is special.
\end{abstract}

\maketitle
\setcounter{tocdepth}{1}
{\footnotesize
\tableofcontents
}

\section{Introduction}
Let $\pi$ be a regular algebraic cuspidal automorphic representation of $\GL_n$ over 
a CM field $F$. 
Choose a prime $l$ and an isomorphism $\iota : \Qbar_l \iso \C$. 
If $\pi$ is polarizable, then for any finite place $v$ of $F$, the Galois representation 
$r_\iota(\pi)$ attached to $\pi$ and $\iota$ 
satisfies local-global compatibility at $v$ \cite{BLGGT11, Caraianilnotp, Caraianilp, ht, shin, ty}.
The most subtle part is identifying the monodromy operator, 
the proofs of which rely on finding a base change of $r_\iota(\pi)$ or its 
tensor square
in the cohomology of a 
Shimura variety.

When $\pi$ is not polarizable, it should not be possible to find 
$r_\iota(\pi)$ itself in the 
cohomology of a Shimura variety (for precise statements, see 
\cite{Johansson_Thorne_cohom}). 
One can hope to access the direct sum of $r_\iota(\pi)$ and a twist 
of its conjugate dual via the cohomology of Shimura varieties, which is a basic 
starting point for the construction of $r_\iota(\pi)$ 
by Harris--Lan--Talor--Thorne \cite{hltt} as well as the alternate construction 
by 
Scholze \cite{Scholze-torsion}. 
These constructions use $l$-adic interpolation, so are well suited to keeping 
track 
of characteristic polynomials, and local-global compatibility was proved up to 
semisimplification by Varma \cite{ilavarma} 
for all $v \nmid l$.
But it doesn't seem possible to understand the monodromy operator in this way.
We overcome this problem in almost all cases of rank $2$ and weight $0$:

\begin{theorem}\label{theorem_main_from_intro}
Suppose that $F$ is a CM field and that $\pi$ is a regular
algebraic cuspidal automorphic representation of $\GL_2(\A_F)$ of
weight $0$.
There is a set of primes $l$ of Dirichlet density one
such that for any $\iota : \Qbar_l \iso \C$,
the $l$-adic Galois representation $r_\iota(\pi) : 
G_F \to \GL_2(\Qbar_l)$ attached to $\pi$ and $\iota$ satisfies 
\[ \iota \mathrm{WD}(r_\iota(\pi)|_{G_{F_v}})^{\Fss} \cong 
\rec_{F_v}(\pi_v \lvert\det\rvert^{-1/2}), \]
for all finite places $v\nmid l$ in $F$. 
\end{theorem}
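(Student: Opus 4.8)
The plan is to upgrade the semisimplified local–global compatibility of \cite{ilavarma} to the full statement by showing that no monodromy is lost, using the automorphy lifting technology that is now available over CM fields. By \cite{ilavarma} we already know that $\mathrm{WD}(r_\iota(\pi)|_{G_{F_v}})^{\Fss}$ and $\rec_{F_v}(\pi_v\lvert\det\rvert^{-1/2})$ have isomorphic semisimplifications for all $v\nmid l$; since for $\GL_2$ the monodromy operator $N$ can be nonzero only when $\pi_v$ is an unramified twist of the Steinberg representation, it suffices to treat such a place $v$. For such $v$, Varma's result gives that $r_\iota(\pi)|_{G_{F_v}}$ has Weil–Deligne semisimplification $\chi\oplus\chi\nu$ with $\nu$ unramified and $\nu(\Frob_v)=q_v$; the Frobenius-semisimple Weil–Deligne representation with these Frobenius eigenvalues and $N\neq 0$ is unique, so the theorem reduces to proving that $N\neq 0$, i.e.\ that $r_\iota(\pi)|_{G_{F_v}}$ is \emph{not} the direct sum $\chi\oplus\chi\nu$. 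After twisting $\pi$ by a finite-order Hecke character (which alters neither its weight nor the monodromy of $r_\iota(\pi)$) we may assume $\chi$ is unramified, so the remaining ``bad case'' is exactly that $r_\iota(\pi)|_{G_{F_v}}$ is unramified at $v$. If $\pi$ is dihedral, or $r_\iota(\pi)$ has finite image, then $r_\iota(\pi)|_{G_{F_w}}$ has $N=0$ for every $w$ and $\pi$ is nowhere special, so the statement is vacuous; we may therefore assume neither, and then $\barrho$, the reduction of $r_\iota(\pi)$, is absolutely irreducible with image containing $\SL_2(\Fl)$ for all but finitely many $l$.

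To rule out the bad case, I would fix $l$ in the Dirichlet density one set of primes for which $\barrho$ is absolutely irreducible, has enormous image, is decomposed generic, is Fontaine–Laffaille at the places above $l$ (this uses weight $0$), and $l$ is unramified in $F$ and prime to the level of $\pi$. Assuming $r_\iota(\pi)|_{G_{F_v}}$ is unramified, consider the global deformation problem for $\barrho$ that is Fontaine–Laffaille above $l$, of the fixed type of $\pi_w$ at each bad place $w\neq v$, and \emph{unramified at} $v$; then $r_\iota(\pi)$ is a $\Qbar_l$-point of the associated reduced universal deformation ring $R$. The key step is to run the Calegari–Geraghty method, in the form established for $\GL_2$ over CM fields (building on potential automorphy and automorphy lifting over CM fields, and on the torsion-vanishing input behind \cite{hltt, Scholze-torsion}), to obtain $R\cong\T$, where $\T$ is the Hecke algebra acting on the $\barrho$-isotypic cohomology of the locally symmetric space for $\GL_2/F$ \emph{of level prime to} $v$. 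Since the method patches this cohomology and a priori one cannot see that it is nonzero — there is no available residual level-lowering at $v$ over a CM field — one is forced to run it through Taylor's Ihara-avoidance trick at $v$, the companion automorphic input, for a suitable ramified principal-series type at $v$, being supplied by base change to a solvable CM extension, potential automorphy there, and solvable descent.

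Granting $R\cong\T$, the point $r_\iota(\pi)$ of $R$ corresponds to a system of Hecke eigenvalues occurring in the cohomology of level prime to $v$; absolute irreducibility of $\barrho$ forces this system to be cuspidal, and in weight $0$ and characteristic zero it is classical, so it arises from a cuspidal automorphic representation $\pi^{\dagger}$ of $\GL_2(\A_F)$ of weight $0$ that is \emph{unramified at} $v$ and satisfies $r_\iota(\pi^{\dagger})\cong r_\iota(\pi)$. Comparing Satake parameters at the unramified places and invoking strong multiplicity one yields $\pi^{\dagger}\cong\pi$, contradicting the fact that $\pi_v$ is ramified. Hence $N\neq 0$ on $r_\iota(\pi)|_{G_{F_v}}$, and, combined with \cite{ilavarma}, this gives $\mathrm{WD}(r_\iota(\pi)|_{G_{F_v}})^{\Fss}\cong\rec_{F_v}(\pi_v\lvert\det\rvert^{-1/2})$ for every $v\nmid l$ and every such $l$.

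The main obstacle will be the automorphy lifting input of the second paragraph. Over a CM field the relevant cohomology is spread over several degrees and carries a great deal of torsion, so one is committed to the Calegari–Geraghty framework; extracting an actual isomorphism $R\cong\T$ from it (a mere surjection would not suffice) requires the patched module to be as large as possible, which is where one needs $\barrho$ to have enormous image and to be decomposed generic, and $l$ to be large and unramified in $F$. It is precisely these hypotheses — together with those needed for the auxiliary potential automorphy over the solvable CM extension — that hold only for $l$ in a Dirichlet density one set, and this is the origin of the restriction on $l$ in the statement.
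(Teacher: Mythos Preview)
Your reduction to proving $N\neq 0$ at places where $\pi_v$ is a twist of Steinberg is correct, and the density-one conditions you impose on $l$ match those the paper uses. The gap is in the middle step. To run an $R\cong\T$ argument over $F$ with an unramified-at-$v$ local condition, you need automorphic input on the Hecke side at level prime to $v$, i.e.\ a characteristic-zero cuspidal form congruent to $\pi$ and unramified at $v$. Over a CM field this level-lowering congruence can genuinely fail: the relevant class in the localized cohomology at the lower level may be pure torsion (see \cite[\S7.4.1--7.4.2]{Calegari-Venkatesh}), so the localized $\T$ need not have any characteristic-zero point, and no amount of patching will produce one. Taylor's Ihara-avoidance trick does not help here: it compares an unramified type with a tamely ramified principal-series type at an \emph{auxiliary} prime and transfers automorphic support from one side to the other, but your $\pi$ is Steinberg at $v$ and contributes to neither side. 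Finally, the phrase ``potential automorphy there, and solvable descent'' is where the argument becomes circular: potential automorphy produces a non-solvable extension, so there is no solvable descent available to bring the input back to $F$.

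The paper's fix is to abandon working over $F$ entirely. One applies potential automorphy (via moduli of Hilbert--Blumenthal abelian varieties and a Moret--Bailly argument) directly to $\overline{r}_\iota(\pi)$ to produce a CM extension $F_1/F$, in general non-solvable, together with a weight-$0$ cuspidal $\pi_1$ on $\GL_2(\A_{F_1})$ that is unramified above $v$ and above $l$ and satisfies $\overline{r}_\iota(\pi_1)\cong \overline{r}_\iota(\pi)|_{G_{F_1}}$. The automorphy lifting theorem of \cite{10author} over $F_1$ then yields a cuspidal $\Pi$ on $\GL_2(\A_{F_1})$, unramified above $v$, with $r_\iota(\Pi)\cong r_\iota(\pi)|_{G_{F_1}}$. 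Your endgame via strong multiplicity one is now unavailable, since the base change of $\pi$ to the non-solvable $F_1$ is not known to exist; instead one observes that for any $w\mid v$ in $F_1$ the assumption $N=0$ gives $r_\iota(\Pi)|_{G_{F_{1,w}}}\cong \chi|_{G_{F_{1,w}}}\oplus \chi\epsilon_l|_{G_{F_{1,w}}}$, so $\Pi_w$ would be an unramified principal series whose Satake parameters have ratio $q_w$. This contradicts the genericity of the cuspidal $\Pi$, and that is the desired contradiction.
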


We prove a more technical result, Theorem~\ref{thm:monodromynew} below, 
that applies to any prime $l$ and $\iota : \Qbar_l \iso \C$ to which 
we can apply an automorphy lifting theorem. 
The hypotheses necessary to apply the automorphy lifting theorem 
are known to hold for a density one set of primes (see Lemma~\ref{lemma:bad_set}), 
but should hold for all but finitely many (see Remark~\ref{rmk:good-primes}).

\subsection*{Method of proof} 
In light of Varma's results, we need to prove that 
if $v \nmid l$ is a finite place of $F$ at which $\pi$ is special, 
then $r_\iota(\pi)|_{G_{F_v}}$ has nontrivial monodromy. 
Results of a similar spirit were proved by one of us (J.N.) \cite{James-monodromy} 
in the context of Hilbert modular forms of partial weight one. 
In this situation, the Galois representations are also constructed by congruences, 
and one cannot realize these Hecke eigensystems in the Betti cohomology of a Shimura variety.
The proof relies on a $p$-adic version of Mazur's principle \cite{James-JL}. 

Another approach was developed by Luu \cite{Luu-lg}, 
relying on automorphy lifting theorems.
The basic idea in the context of $\GL_2$ is as follows. 
Assume that $\pi$ is a twist of the Steinberg representation at $v$. 
After a base change, we can assume that $\pi$ is an unramified twist 
of the Steinberg representation at $v$.
Now assume that the $l$-adic Galois representation 
$r_\iota(\pi)$ is unramified at $v$. 
Then so is its residual representation $\overline{r}_\iota(\pi)$, 
so we can hope to find a congruence to an automorphic representation $\pi_1$ 
such that $\pi_1$ is unramified at $v$. 
One can then apply automorphy lifting with the place $v$ 
left out of the ramification set to prove that $r_\iota(\pi) \cong 
r_\iota(\pi_2)$ 	
for some automorphic representation $\pi_2$ that is unramified at $v$. 
This contradicts strong multiplicity one. 

The main ingredient needed to execute this strategy in the present context is an 
automorphy lifting theorem of \cite{10author}, recalled in 
Theorem~\ref{thm:main_automorphy_lifting_theorem} below. 
However, there is still a subtlety that needs to be overcome: 
we need to produce a congruence to the automorphic representation 
$\pi_1$ that is unramified at $v$. 
In the situations where the Galois representation in question 
does not appear in the Betti cohomology of a Shimura variety, 
these congruences don't always exist,
see~\cite[\S7.4.1, \S7.4.2]{Calegari-Venkatesh} for examples of level lowering 
congruences to torsion classes which do not have a characteristic 0 lift at the 
lower level. To get around this problem here, 
we use Taylor's potential automorphy method to first prove
(see Theorem~\ref{potaut} for a more precise statement):

\begin{thm}
Suppose that $F$ is a CM field and $l$ is an odd prime unramified in $F$. 
Let $\rhobar : G_F \to \GL_2(k)$ be a continuous representation with $k/\F_l$ 
finite such that $\det(\rhobar) = \overline{\epsilon}_l^{-1}$ 
and such that for each $w|l$,  
$\rhobar|_{G_{F_w}}$ admits a crystalline lift with all 
labelled Hodge--Tate weights equal to $\{0,1\}$

Then there is a CM extension $F'/F$ such that $\rhobar|_{G_{F'}}$ 
arises from a regular algebraic weight $0$ cuspidal automorphic 
representation $\pi_1$ of $\GL_2(\A_{F'})$ that we can assume is 
unramified above our fixed $v \nmid l$ in $F$.
\end{thm}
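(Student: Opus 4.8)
The plan is to prove this by Taylor's potential automorphy method, in the form now standard in the area, together with the automorphy lifting theorem of~\cite{10author} recalled as Theorem~\ref{thm:main_automorphy_lifting_theorem}. The idea is to realise $\rhobar$, over a suitable CM extension $F'/F$, as the mod~$l$ representation attached to an elliptic curve $E/F'$ that we also arrange to have good reduction above $v$, and then prove that $E$ is automorphic. We may assume $l \geq 5$, since for $l = 3$ the representation $\rhobar$ already has solvable image, hence is automorphic over $F$ by Langlands--Tunnell, and one reduces to the unramified-above-$v$ case by a base change. Replacing $F$ by the CM field $F(\zeta_l,\zeta_3)$ (a place above the original $v$ is still away from $l$) makes $\overline{\epsilon}_l$ and the mod~$3$ cyclotomic character trivial on $G_F$, so the hypothesis $\det\rhobar = \overline{\epsilon}_l^{-1}$ becomes ``$\det\rhobar$ is trivial'', matching the determinant of the $l$-torsion of an elliptic curve over the new $F$; the correcting cyclotomic twist is reinstated at the end.

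First I would fix a continuous surjection $\psibar_3 : G_F \to \SL_2(\F_3)$ and an embedding $\SL_2(\F_3) \hookrightarrow \GL_2(\C)$, and let $T$ be the moduli space over $F$ of elliptic curves $E$ equipped with isomorphisms $E[l] \xrightarrow{\sim} \rhobar$ and $E[3] \xrightarrow{\sim} \psibar_3$ of $G_F$-modules; equivalently, $T$ is the fibre product over the $j$-line of the twists $Y(l)^{\rhobar}$ and $Y(3)^{\psibar_3}$ of the modular curves $Y(l)$ and $Y(3)$, and after removing finitely many points it is a smooth, geometrically connected curve over $F$. The crucial point is that $T$ has $F_w$-points for every place $w$ of $F$, and that above $v$ one can choose a point whose elliptic curve has good reduction. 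Away from $l$, $3$ and $v$ this is properness of a projective model together with Hensel's lemma; at $3$ the curve $Y(3)^{\psibar_3}$ has genus zero and the local point is classical; above $v$, after enlarging the local field so that $\rhobar$ and $\psibar_3$ are unramified there, one takes a suitable elliptic curve with good reduction; and at $w \mid l$ one uses the hypothesis that $\rhobar|_{G_{F_w}}$ admits a crystalline lift with Hodge--Tate weights $\{0,1\}$: since $l$ is odd and unramified in $F$, Fontaine--Laffaille theory (equivalently, the theory of finite flat group schemes of $l$-power order over $\mathcal{O}_{F_w}$) shows $\rhobar|_{G_{F_w}}$ is the generic fibre of such a group scheme with Weil-pairing-compatible self-duality, and a group scheme of this kind occurs as the $l$-torsion of an elliptic curve over $F_w$.

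Next I would apply the Moret--Bailly theorem used throughout this subject (as in~\cite{BLGGT11, 10author}) to the Weil restriction $\Res_{F/F^+} T$ along the maximal totally real subfield $F^+ \subseteq F$, imposing the good-reduction condition above $v$ and requiring the field produced to be linearly disjoint over $F^+$ from $F$ and from the compositum of the fixed fields of $\rhobar$, $\psibar_3$ (and of finitely many further auxiliary extensions). This yields a totally real $L^+/F^+$ carrying such a point; setting $F' = L^+ F$ gives a CM field and an elliptic curve $E/F'$ with $E[l] \cong \rhobar|_{G_{F'}}$, with $E[3] \cong \psibar_3|_{G_{F'}}$ still surjecting onto $\SL_2(\F_3)$, and with good reduction above $v$. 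Now $\overline{\rho}_{E,3}|_{G_{F'}}$ has solvable image and lifts to a two-dimensional complex representation, so it is automorphic by the Langlands--Tunnell theorem over $F'$; one then verifies that $E/F'$ is automorphic --- either by applying Theorem~\ref{thm:main_automorphy_lifting_theorem} directly to $\rho_{E,3}|_{G_{F'}}$ (the image conditions coming from the linear disjointness, the local conditions above $3$ from good reduction, $3$ being unramified, and the Fontaine--Laffaille range), or, should the image hypotheses require it, after first passing to a further extension by a switch of auxiliary prime in the style of Wiles's $3$--$5$ argument. Hence $\rho_{E,l}|_{G_{F'}}$ is attached to a regular algebraic cuspidal automorphic representation $\pi_E$ of $\GL_2(\A_{F'})$ of weight $0$, unramified above $v$ since $E$ has good reduction there.

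Finally, $\rhobar|_{G_{F'}} \cong E[l]|_{G_{F'}} = \overline{\rho}_{E,l}|_{G_{F'}}$ by construction, so twisting $\pi_E$ by the finite-order Hecke character correcting the determinant to the normalisation of the statement (necessarily unramified above $v$, as both determinants are) yields a regular algebraic weight $0$ cuspidal automorphic representation $\pi_1$ of $\GL_2(\A_{F'})$, unramified above $v$, from which $\rhobar|_{G_{F'}}$ arises; the refinement tracking the ramification of $F'$ is Theorem~\ref{potaut}. I expect the main obstacle to be the construction of a local point of $T$ above the places $w \mid l$: this is precisely where the hypotheses that $l$ be unramified in $F$ and that $\rhobar|_{G_{F_w}}$ admit a crystalline lift with Hodge--Tate weights $\{0,1\}$ are used, and it rests on the non-trivial passage from finite flat group schemes to elliptic curves. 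A secondary difficulty is to check that all the hypotheses of the automorphy lifting theorem survive the base change to $F'$, in particular reconciling residual automorphy at the auxiliary prime $3$ with its image hypotheses.
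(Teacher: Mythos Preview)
Your proposal has the right overall architecture --- a Moret--Bailly argument on a moduli space, followed by automorphy lifting --- but two genuine gaps force a different implementation. First, $\rhobar$ takes values in $\GL_2(k)$ with $k/\F_l$ an arbitrary finite extension, whereas $E[l]$ for an elliptic curve is only $2$-dimensional over $\F_l$; your moduli space $T$ of elliptic curves with $E[l]\cong\rhobar$ therefore makes sense only when $k=\F_l$. The paper handles general $k$ by replacing elliptic curves with Hilbert--Blumenthal abelian varieties for a totally real field $M$ chosen so that some prime $\frakl\mid l$ of $M$ has residue field $k$; then $A[\frakl]$ is a rank-$2$ module over $k$ and one imposes $A[\frakl]\cong\rhobar^\vee$. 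Working with $\rhobar^\vee$ also makes the determinant match the Weil pairing directly, so your base change to $F(\zeta_l,\zeta_3)$ is unnecessary --- and in fact it is not permissible, since it ramifies $l$ and so destroys both the Fontaine--Laffaille input and the unramifiedness hypothesis of Theorem~\ref{thm:main_automorphy_lifting_theorem}.

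Second, Theorem~\ref{thm:main_automorphy_lifting_theorem} explicitly requires $l\ge 5$, so it cannot be applied at your auxiliary prime $3$; and Langlands--Tunnell over a CM field does not directly produce a weight-$0$ cuspidal automorphic representation to feed into it. The paper sidesteps this entirely: it fixes a non-CM elliptic curve $E/\Q$ with good reduction at $l$ and under $v$, chooses an auxiliary prime $p>5$ with $\SL_2(\F_p)\subset\overline{r}_{E,p}(G_F)$, and adds the level structure $A[\gp]\cong E[p]$ to the HBAV moduli problem. Residual automorphy at $p$ is then supplied not by Langlands--Tunnell but by the known potential automorphy of $E$ over totally real fields (Proposition~\ref{ecpotaut}), which already comes in weight $0$ and unramified above $l$, $p$ and $v$; Theorem~\ref{thm:main_automorphy_lifting_theorem} is applied at $p$. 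Your instinct about the local points at $w\mid l$ is correct, but in the HBAV setting the passage from the crystalline-lift hypothesis to a point on the moduli space becomes Lemma~\ref{FLlemma} (Fontaine--Laffaille produces a principally quasi-polarised $\cO_{M,\frakl}$-divisible group over $\cO_{F_w}$) followed by Proposition~\ref{findAV} (existence of an HBAV over $\overline{\F}_l$ with prescribed $\frakl$-divisible group) and Serre--Tate lifting.
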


Applying the automorphy lifting theorem, we deduce that 
$r_\iota(\pi)|_{G_{F'}}$ arises from a cuspidal automorphic repesentation 
$\pi_2$ of $\GL_2(\A_{F'})$ that is unramified at all places above $v$.
We can no longer use multiplicity one, as this would require knowing the base 
change of $\pi$ to $F'$ exists, and $F'/F$ may not be solvable. 
However, by Varma's results, we know the Frobenius eigenvalues of 
$r_\iota(\pi)$ at $v$ and thus at any place above $v$ in $F'$.
This together with the fact that $\pi_2$ is unramified above $v$ 
contradicts the genericity of $\pi_2$.

Naturally, the automorphy lifting theorem we use contains several technical assumptions, reflected in the statement of Theorem~\ref{thm:monodromynew}. 
These technical assumptions should hold for all but finitely many primes $l$, but we do not know how to prove this. 
Using results of Larsen \cite{Larsen}, one can show that they hold on a density 
one set (see Lemma~\ref{lemma:bad_set}), resulting in 
Theorem~\ref{theorem_main_from_intro}. 

Let us finally remark on the restriction to rank $2$ and weight $0$ in Theorems~\ref{theorem_main_from_intro} and \ref{thm:monodromynew}. 
These restrictions come from the combination of automorphy lifting theorems and 
potential automorphy theorems available to us in the non-polarizable case over CM fields. 
(There is also a simplification in rank $2$ that there are only two possible conjugacy classes for the monodromy operator, but we do not believe that this is a serious issue.)
Common to any potential automorphy theorem is some moduli space of motives in which one realizes a fixed mod $l$ Galois representation. 
In \cite[\S7.2.5]{10author}, a moduli space of elliptic curves is used, and this could be used to prove a version Theorem~\ref{thm:monodromynew} with the additional assumption that the prime $l$ splits completely in the coefficient field $M_\pi$ of $\pi$. 
This in turn would yield a version of Theorem~\ref{theorem_main_from_intro} valid for a positive density set of primes $l$. 
In order to allow more general primes $l$ in Theorem~\ref{thm:monodromynew} and obtain the density one set in Theorem~\ref{theorem_main_from_intro}, we use a moduli space of Hilbert--Blumenthal abelian varieties, as in Taylor's original work on the subject \cite{tay-fm}. 
In either case, we are confined to rank $2$, and since the Fontaine--Laffaille automorphy lifting theorem that we use does not allow a change of weight, we are also confined to weight $0$.

Still working in rank $2$, it seems possible to remove the weight $0$ assumption in Theorem~\ref{thm:monodromynew} at the cost of imposing an ordinarity assumption on $\pi$ at the prime $l$ by using the ordinary automorphy lifting theorem of \cite{10author}, which does allow change of weight. 
This is currently being investigated by Yuji Yang.

In higher rank, there are robust potential automorphy theorems in the polarizable\ case \cite{HSBT,blght,BLGGT} using the so-called Dwork family of motives. 
If theorems similar to these were proved in the non-polarizable case, then we 
believe that the methods of this paper could be used to prove versions of 
Theorem~\ref{theorem_main_from_intro} and \ref{thm:monodromynew} in higher rank.

\subsection*{Notation}
For a field $F$, we let $\overline{F}$ denote a separable closure and 
$G_F = \Gal(\overline{F}/F)$ the absolute Galois group. If $F$ is a CM number 
field, then we write $F^+$ for its maximal totally real subfield. 
CM number fields are always assumed to be imaginary.

Let $F$ be a number field. 
If $v$ a finite place of $F$, $l$ is a prime, and 
$r : G_{F_v} \to \GL_2(\Qbar_l)$ is a continuous representation, 
we let $\WD(r)^{\Fss}$ be the associated 
Frobenius semisimple Weil--Deligne representation. 
If $\iota : \Qbar_l \iso \C$ is an isomorphism of fields, 
we let $\iota\WD(r)^{\Fss}$ denote its extension of scalars 
to $\C$ via $\iota$. 
We write $\rec_{F_v}$ for the local Langlands correspondence of \cite{ht}.

Let $\pi$ be a regular algebraic cuspidal automorphic representation 
of $\GL_2(\A_F)$. 
We say that $\pi$ has \emph{weight} $0$ if it has the same 
infinitesimal character as the trivial (algebraic) representation 
of $\Res_{F/\Q}\GL_2$. 
We let $M_\pi \subset \C$ denote the coefficient field of $\pi$; 
it is the fixed field of 
$\{\sigma \in \Aut(\C) : {}^\sigma \pi^\infty \cong \pi^\infty\}$. 
If $l$ is a prime and $\iota : \overline{\Q}_l \iso \C$ is an 
isomorphism of fields, we let $r_\iota(\pi) : G_F \to \GL_2(\Qbar_l)$ 
be the $l$-adic Galois representation attached to $\pi$ and $\iota$ 
by Harris--Lan--Taylor--Thorne \cite{hltt}. 
It is characterized by the property that if $p \ne l$ is a prime 
above which $\pi$ and $F$ are unramified and $v|p$ in $F$, then
\[ \iota\WD(r_\iota(\pi)|_{G_{F_v}})^{\Fss} \cong 
\rec_{F_v}(\pi_v\lvert\det\rvert^{-1/2}).\]
The isomorphism $\iota$ induces a prime $\lambda|l$ in $M_\pi$ 
and an algebraic closure $\overline{M}_{\pi,\lambda} = \Qbar_l$ of 
the completion $M_{\pi,\lambda}$, and we also write 
$r_{\pi,\lambda} : G_F \to \GL_2(\overline{M}_\lambda)$ for $r_\iota(\pi)$ 
in this case. 
Conversely, given $\lambda|l$ in $M_\pi$, an algebraic closure 
$\overline{M}_{\pi,\lambda}$ of $M_{\pi,\lambda}$, and an isomorphism 
$\iota : \overline{M}_{\pi,\lambda} \iso \C$, 
we obtain $r_{\pi,\lambda} = r_\iota(\pi)$ 
by identifying $\overline{M}_{\pi,\lambda}$ with $\Qbar_l$.

We let $\epsilon_l$ denote the $l$-adic cyclotomic character. 
We normalize our Hodge--Tate weights so that $\epsilon_l$ 
has all labelled Hodge--Tate weights equal to $-1$. 
We let $\zeta_l$ denote a primitive $l$th root of unity. 

Let $F$ and $M$ be number fields. 
If $A$ is an abelian variety over $F$ equipped with an 
embedding of rings $\cO_M \hookrightarrow \End(A)$ and 
$\frakl$ is a prime of $M$, then we let $r_{A,\frakl}$ 
denote the representation of $G_F$ on 
$T_\frakl(A)\otimes_{\cO_{M,\frakl}} \overline{M}_\frakl$.

\subsection*{Acknowledgements}
We would like to thank Ariel Weiss, Yuji Yang, and the anonymous referees for comments on 
earlier versions of this paper. 
We would like to thank Frank Calegari, Ana Caraiani, David Helm, 
Toby Gee, Bao Le Hung, Peter Scholze, Richard Taylor, and Jack Thorne 
for many helpful conversations. 
This project grew out of our joint work \cite{10author}, 
and the intellectual debt this work owes to \cite{10author} 
will be clear to the reader.

\section{Automorphy of compatible systems}\label{sec:aut_lifting}

The crucial ingredient we need for the results of this paper is 
the following automorphy lifting theorem over CM fields,
which is a special case of \cite[Theorem 6.1.1]{10author} (the notions 
of enormous and decomposed generic will be recalled after the statement of the 
theorem):

\begin{theorem}\label{thm:main_automorphy_lifting_theorem}
	Let $F$ be a CM field and let $\rho : G_F \to \GL_2(\overline{\bQ}_l)$ 
    be a continuous representation satisfying the following conditions:
	\begin{enumerate}
		\item $\rho$ is unramified almost everywhere.
		\item For each place $v | l$ of $F$, the representation 
		$\rho|_{G_{F_v}}$ is crystalline with labelled Hodge--Tate weights all 
		equal to $\{0,1\}$. The prime $l$ is unramified in $F$.
		\item\label{hyp:residual_image}
		$\overline{\rho}$ is decomposed generic and 
		$\overline{\rho}|_{G_{F(\zeta_l)}}$
is absolutely irreducible with enormous image. There exists 
		$\sigma \in G_F - G_{F(\zeta_l)}$ such that $\overline{\rho}(\sigma)$ 
		is a scalar. We have $l \ge 5$.
		\item There exists a cuspidal automorphic representation $\pi$ of 
		$\GL_2(\bA_F)$ satisfying the following conditions:
		\begin{enumerate}
			\item $\pi$ is regular algebraic of weight $0$.
			\item There exists an 
			isomorphism $\iota : \overline{\bQ}_l \iso 
			\bC$ such that $\rhobar \cong \overline{r}_\iota(\pi)$.
\item If $v | l$ is a place of $F$, then $\pi_v$ is unramified. 
		\end{enumerate}
	\end{enumerate}
	Then $\rho$ is automorphic: there exists a cuspidal automorphic 
	representation $\Pi$ of $\GL_2(\bA_F)$ of weight $0$ such that $\rho 
	\cong r_\iota(\Pi)$. Moreover, if $v$ is a finite place of $F$ and 
	either 
	$v | l$ or both $\rho$ and $\pi$ are unramified at $v$, then $\Pi_v$ is 
	unramified.
\end{theorem}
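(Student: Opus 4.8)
The plan is to deduce this as the special case of rank $2$ and weight $0$ of the automorphy lifting theorem \cite[Theorem 6.1.1]{10author} for not-necessarily-polarizable $l$-adic Galois representations over CM fields. Since that theorem is formulated to allow a choice of local deformation condition at the places above $l$, the real content of the deduction is simply to pin down which condition we use and to check that the hypotheses above are literally those of \cite{10author} in that case. First I would impose, at each place $v \mid l$, the \emph{Fontaine--Laffaille} local condition. This is legitimate here because the labelled Hodge--Tate weights are $\{0,1\}$, $l$ is unramified in $F$, and $l \ge 5$, so the weights lie in an interval of length $< l - 1$; thus hypothesis~(2) is exactly the Fontaine--Laffaille condition on $\rho|_{G_{F_v}}$. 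Hypothesis~(4c) then forces $r_\iota(\pi)|_{G_{F_v}}$ to be crystalline with Hodge--Tate weights $\{0,1\}$ (local--global compatibility at $l$ for the unramified $\pi_v$), hence also Fontaine--Laffaille; and since the Fontaine--Laffaille local lifting ring is formally smooth, in particular connected, $\rho|_{G_{F_v}}$ and $r_\iota(\pi)|_{G_{F_v}}$ determine the same local deformation problem at each $v \mid l$, which is the compatibility at $l$ between $\rho$ and $\pi$ that \cite{10author} requires.

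With that choice fixed, the remaining hypotheses transcribe directly. Hypothesis~(1) is ``unramified almost everywhere''. Hypothesis~(3) --- $\overline{\rho}$ absolutely irreducible and decomposed generic, $\overline{\rho}|_{G_{F(\zeta_l)}}$ with enormous image, a $\sigma \in G_F - G_{F(\zeta_l)}$ with $\overline{\rho}(\sigma)$ scalar, and $l \ge 5$ --- is precisely the package of Taylor--Wiles hypotheses under which \cite[Theorem 6.1.1]{10author} operates. Hypothesis~(4) supplies the regular algebraic cuspidal $\pi$ of weight $0$ with $\overline{r}_\iota(\pi) \cong \overline{\rho}$; here ``weight $0$'' is the normalization that makes $r_\iota(\pi)$ crystalline with Hodge--Tate weights $\{0,1\}$ above $l$ (consistent with~(2)) and ensures $\pi$ is cohomological, so that $r_\iota(\pi)$ is defined by \cite{hltt}. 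Invoking \cite[Theorem 6.1.1]{10author} then produces a cuspidal automorphic representation $\Pi$ of $\GL_2(\bA_F)$, necessarily of weight $0$, with $\rho \cong r_\iota(\Pi)$.

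For the final clause I would use that the proof in \cite{10author} produces $\Pi$ with level structure supported on a finite set $S$ of finite places of $F$: this $S$ must contain the places above $l$ and the places at which $\rho$ or $\pi$ ramifies, but can otherwise be taken to avoid any prescribed place $v \nmid l$ at which $\rho$ and $\pi$ are both unramified, so $\Pi_v$ is unramified at such $v$; while at $v \mid l$ the Fontaine--Laffaille (equivalently level-prime-to-$l$) condition places the Hecke eigensystem giving $\Pi$ in a space of forms unramified at $v$, so $\Pi_v$ is unramified there too. The one point where genuine care is needed --- and what I would regard as the main obstacle --- is reconciling the packaging of \cite[Theorem 6.1.1]{10author} with the statement above: confirming that the Fontaine--Laffaille situation is indeed among its permitted local conditions at $l$, and that the ``moreover'' clause (both at $v \mid l$ and at the auxiliary unramified places) falls out of how $\Pi$ is constructed there rather than requiring an independent argument.
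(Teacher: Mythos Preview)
Your proposal is correct and matches the paper's approach: the paper simply records this theorem as a special case of \cite[Theorem~6.1.1]{10author} without further argument, and your elaboration of how the hypotheses (in particular the Fontaine--Laffaille condition at $v\mid l$ and the ``moreover'' clause on unramified places) line up with those of \cite{10author} is exactly the verification one would write out if asked to justify that citation.
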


Let $\rhobar : G_F \to \GL_2(\Fbar_l)$ be a continuous representation and 
let $\ad^0$ denote the set of trace zero matrices in $\operatorname{M}_{2\times 2}(\Fbar_l)$. 
The image $H = \rhobar(G_F)$ of $\rhobar$ is called 
\emph{enormous} if it satisfies the following 
(c.f \cite[Definition~6.2.28 and Lemma~6.2.29]{10author}):
\begin{enumerate}
	\item $H$ has no nontrivial $l$-power order quotient.
	\item $H^0(H, \ad^0) = H^1(H, \ad^0) = 0$.
	\item For any simple $\Fbar_l[H]$-submodule $W \subseteq \ad^0$, 
	there is a regular semisimple $h \in H$ such that $W^h \ne 0$.
\end{enumerate}
We say a prime $p \ne l$ is \emph{decomposed generic for} $\rhobar$ if it
splits completely in $F$ and for any $v|p$ in $F$, $\rhobar$ is unramified at $v$ and the eigenvalues $\alpha_v$, $\beta_v$ of $\rhobar(\Frob_v)$ satisfy 
$\alpha_v\beta_v^{-1} \notin \{1, p, p^{-1}\}$ 
(c.f. \cite[Definition~2.2.4]{10author}).
We say that $\rhobar$ is \emph{decomposed generic} if there is a prime $p \ne 
l$ that is decomposed generic for $\rhobar$.

\begin{lemma}\label{lemma:enormours}
    Let $F$ be a number field and let $\rhobar : G_F \to 
    \GL_2(\Fbar_l)$ be a continuous representation with $l > 5$. 
    If $\rhobar(G_F) \supseteq \SL_2(\F_l)$, then $\rhobar(G_{F(\zeta_l)})$ 
    is enormous.
\end{lemma}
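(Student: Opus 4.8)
The plan is to verify the three conditions in the definition of enormous for the image $H = \rhobar(G_{F(\zeta_l)})$, using the hypothesis $\rhobar(G_F) \supseteq \SL_2(\F_l)$. First I would pin down what $H$ actually is. Since $F(\zeta_l)/F$ is abelian and $\SL_2(\F_l)$ is perfect for $l > 3$, the image $\rhobar(G_{F(\zeta_l)})$ contains the commutator subgroup of $\rhobar(G_F)$, hence contains $\SL_2(\F_l)$; so $\SL_2(\F_l) \subseteq H \subseteq \GL_2(\F_l)$, with $H$ normalizing $\SL_2(\F_l)$ and $H/(\SL_2(\F_l) \cap H)$ cyclic. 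In particular $H$ acts on $\ad^0 = \mathfrak{sl}_2(\Fbar_l)$ through the adjoint action, and this is where it matters that the action factors through $\PGL_2(\F_l)$.

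The second step is the cohomological vanishing $H^0(H,\ad^0) = H^1(H,\ad^0) = 0$. Since $\SL_2(\F_l) \trianglelefteq H$ with cyclic quotient of order prime to $l$ (the quotient is a subgroup of $\F_l^\times$), inflation–restriction reduces both statements to the corresponding statements for $\SL_2(\F_l)$: one has $H^i(H,\ad^0) = H^i(\SL_2(\F_l),\ad^0)^{H/\SL_2(\F_l)}$. The vanishing of $H^0$ and $H^1$ of $\SL_2(\F_l)$ acting on $\ad^0$ for $l \geq 5$ is standard (e.g. it is exactly the computation underlying the ``big image'' hypotheses in automorphy lifting; $\ad^0$ is an irreducible nontrivial $\SL_2(\F_l)$-module of dimension $3 < l$, so $H^0$ vanishes, and the $H^1$ vanishing is the classical fact that $\SL_2(\F_l)$ has no nontrivial extensions of the trivial module by $\ad^0$ in this range). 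Condition (1), that $H$ has no nontrivial $l$-power order quotient, follows because any such quotient would give one of $\SL_2(\F_l)$ (as $H/\SL_2(\F_l)$ has order prime to $l$), and $\SL_2(\F_l)$ is perfect with $l$-Sylow not normal, so it has no nontrivial $l$-power quotient.

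For the third condition I would first determine the simple $\Fbar_l[H]$-submodules of $\ad^0$. Restricted to $\SL_2(\F_l)$, the module $\ad^0$ is absolutely irreducible for $l \geq 5$ (this uses $l \neq 2, 3$), and since $\SL_2(\F_l) \trianglelefteq H$, the $H$-module $\ad^0$ is then also irreducible — so there is only one simple submodule to check, namely $W = \ad^0$ itself. Then I need a regular semisimple $h \in H$ with $W^h \neq 0$, i.e. with $\ad^0(h)$ having nonzero fixed space; concretely, a nonscalar diagonalizable element whose adjoint action fixes a nonzero trace-zero matrix. Take $h = \mathrm{diag}(a, b) \in \SL_2(\F_l) \subseteq H$ with $a \neq b$, $ab = 1$, $a \neq \pm 1$, which exists since $|\F_l^\times| = l - 1 \geq 4$: such $h$ is regular semisimple, and $\ad(h)$ fixes the diagonal trace-zero matrix $\mathrm{diag}(1, -1)$, so $W^h \neq 0$. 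This disposes of condition (3).

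The main obstacle, such as it is, is not a single hard step but rather keeping the small-prime exclusions honest: the absolute irreducibility of $\ad^0$ over $\SL_2(\F_l)$, the vanishing of $H^1(\SL_2(\F_l),\ad^0)$, and the existence of a suitable regular semisimple diagonal element all degenerate for $l \in \{2,3\}$ (and the $H^1$ vanishing is the genuinely delicate input, being false or subtle for very small $l$), so the hypothesis $l > 5$ is doing real work and I would be careful to cite the precise source for the $\SL_2(\F_l)$-cohomology computation rather than rederive it. Everything else is bookkeeping with inflation–restriction along the prime-to-$l$ cyclic quotient $H/\SL_2(\F_l)$.
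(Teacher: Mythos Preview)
Your first step, showing $H = \rhobar(G_{F(\zeta_l)}) \supseteq \SL_2(\F_l)$ via perfectness of $\SL_2(\F_l)$ and abelianness of $F(\zeta_l)/F$, matches the paper exactly. The paper then simply invokes \cite[Lemma~3.2.3]{geenew} to conclude, whereas you attempt to verify the three defining conditions by hand. That direct verification has a gap: you assert $H \subseteq \GL_2(\F_l)$, hence $\SL_2(\F_l) \trianglelefteq H$ with cyclic prime-to-$l$ quotient, but nothing in the hypotheses forces the image to lie in $\GL_2(\F_l)$. For instance $\rhobar(G_F)$ could equal $\SL_2(\F_{l^2})$, which contains $\SL_2(\F_l)$; then (again by perfectness) $H = \SL_2(\F_{l^2})$ as well, $\SL_2(\F_l)$ is \emph{not} normal in $H$ (since $\PSL_2(\F_{l^2})$ is simple), and $[H : \SL_2(\F_l)] = l(l^2+1)$ is divisible by $l$. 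Both your inflation--restriction computation of $H^1$ and your argument for condition~(1) rest on these false structural claims.

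Condition~(3) survives unchanged, since it only uses $\SL_2(\F_l) \subseteq H$, and $H^0 = 0$ follows directly from $(\ad^0)^H \subseteq (\ad^0)^{\SL_2(\F_l)} = 0$ without any normality. To repair the rest you can invoke Dickson's classification: the projective image of $H$ is $\PSL_2(k)$ or $\PGL_2(k)$ for some finite $k \supseteq \F_l$, the scalar subgroup of $H$ has order prime to $l$, and inflation--restriction along the scalars reduces you to the vanishing of $H^1(\SL_2(k),\ad^0)$ for this possibly larger $k$ --- not merely for $k = \F_l$.
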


\begin{proof}
By \cite[Lemma~3.2.3]{geenew}, and since we have assumed that $l > 5$, 
it suffices to show that $\rhobar|_{G_{F(\zeta_l)}}$ is absolutely irreducible. 
This follows from the assumption that $\rhobar(G_F) \supseteq \SL_2(\F_l)$ and 
the fact that $\SL_2(\F_l)$ is perfect when $l > 3$.
\end{proof}

\begin{lemma}\label{lemma:generic}
    Let $F/\Q$ be a finite Galois extension and let $\rhobar : G_F \to 
    \GL_2(\Fbar_l)$ be a continuous representation with $l > 3$. 
    If $\rhobar(G_F) \supseteq \SL_2(\F_l)$, then $\rhobar$ 
    is decomposed generic.
\end{lemma}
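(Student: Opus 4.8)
The plan is to produce a single rational prime $p$ that splits completely in $F$ and for which the eigenvalue ratio condition holds at the (unique up to conjugacy) behaviour of $\rhobar(\Frob_v)$. The key point is that the conditions ``$p$ splits completely in $F$'' and ``the Frobenius at a place above $p$ avoids finitely many conjugacy-type constraints'' are both governed by Chebotarev in a suitable finite extension. First I would fix a finite Galois extension $L/\Q$ through which $\rhobar$ factors; shrinking if necessary, I may take $L \supseteq F(\zeta_l)$, and by hypothesis $\Gal(L/F)$ surjects onto a subgroup of $\GL_2(\F_l)$ containing $\SL_2(\F_l)$. A prime $p$ unramified in $L$ splits completely in $F$ precisely when $\Frob_p$ (a conjugacy class in $\Gal(L/\Q)$) lies in $\Gal(L/F)$, and then for $v|p$ the class $\rhobar(\Frob_v)$ is the image of that Frobenius element in $\GL_2(\F_l)$.

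So the task reduces to a counting statement inside the group $G := \rhobar(G_F) \subseteq \GL_2(\F_l)$, which contains $\SL_2(\F_l)$: I want a single element $g \in G$ whose eigenvalues $\alpha,\beta \in \Fbar_l$ satisfy $\alpha\beta^{-1} \notin \{1, \bar p, \bar p^{-1}\}$ — but here is the subtlety, namely that the forbidden set $\{1,p,p^{-1}\}$ lives in $\Fbar_l^\times$ only through its reduction, and the prime $p$ is not yet chosen. The clean way around this: it suffices to find $g \in G$ that is regular semisimple with $\alpha\beta^{-1} \ne 1$ and with $\alpha\beta^{-1}$ of multiplicative order $> $ (anything we need) — more precisely, it is enough to find, for the eventual prime $p$, that $\alpha_v\beta_v^{-1} \notin \{1, p, p^{-1}\}$ as elements of $\Fbar_l$. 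I would instead argue as follows: choose $g\in \SL_2(\F_l)$ regular semisimple with eigenvalue ratio $\delta := \alpha\beta^{-1}$ a fixed element of $\F_{l^2}^\times \setminus\{1\}$ of large order (this exists since $l > 3$: e.g. take $g$ with eigenvalues in $\F_{l^2}$ of order $l+1$, so $\delta$ has order $> 3$). By Chebotarev applied to $L/\Q$, the set of primes $p$ with $\Frob_p \in \Gal(L/F)$ mapping to the conjugacy class of $g$ has positive density; among these, all but finitely many satisfy $\bar p \ne \delta$ and $\bar p \ne \delta^{-1}$ in $\F_{l^2}$ (since $\delta^{\pm 1} \ne 1$, the congruence $p \equiv \delta^{\pm 1}$ excludes $p$ in fixed residue classes, which does not exhaust a positive-density set), and of course $\delta \ne 1$. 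Pick such a $p$: then $p$ splits completely in $F$, $\rhobar$ is unramified at every $v|p$, and $\alpha_v\beta_v^{-1} = \delta^{\pm 1} \notin \{1, \bar p, \bar p^{-1}\}$, so $p$ is decomposed generic for $\rhobar$.

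The main obstacle — and the only place care is needed — is bookkeeping the interaction between the two roles of $p$: as an index for a Chebotarev class in $\Gal(L/\Q)$ and as a scalar $\bar p \in \F_l$ appearing in the genericity inequality. The resolution above is to first pin down the eigenvalue ratio $\delta$ as a fixed element of large order using only $\SL_2(\F_l) \subseteq \rhobar(G_F)$ and $l>3$, and only afterwards invoke Chebotarev to select $p$, at which stage avoiding the finitely many bad residue classes $p \equiv \delta, \delta^{-1}$ costs nothing. One minor point to check is that $g$ and $g^{-1}$ (eigenvalue ratios $\delta$ and $\delta^{-1}$) are both hit, but since we only need one good prime this is immaterial; and $F/\Q$ being Galois is used only to ensure $L/\Q$ is Galois so that $\Frob_p$ makes sense as a conjugacy class, with $\{v \mid p\}$ behaving uniformly.
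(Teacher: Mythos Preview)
Your plan has a genuine gap at the point where you pass from the $\Gal(L/\Q)$-conjugacy class $\Frob_p$ to the elements $\rhobar(\Frob_v)$ for the various places $v \mid p$ of $F$. You assert that each $\rhobar(\Frob_v)$ has eigenvalue ratio $\delta^{\pm1}$, but this is not justified. The issue is that while the elements $\Frob_v \in \Gal(L/F)$ for different $v \mid p$ are all conjugate \emph{in $\Gal(L/\Q)$} to your chosen lift $\tilde g$, they need not be conjugate in $\Gal(L/F)$; and since $\rhobar$ is only defined on $\Gal(L/F)$, conjugation by an element $\sigma \in \Gal(L/\Q) \smallsetminus \Gal(L/F)$ does not interact well with $\rhobar$. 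Concretely, $\rhobar(\sigma\tilde g\sigma^{-1})$ has no reason to be $\GL_2(\Fbar_l)$-conjugate to $\rhobar(\tilde g) = g$: the map $h \mapsto \sigma h\sigma^{-1}$ is an automorphism of $\Gal(L/F)$ which need not even descend through $\rhobar$ (that would require $\overline{F}^{\ker\rhobar}/\Q$ to be Galois), and when it does it may induce a genuinely outer automorphism of the image. Thus a single $\Gal(L/\Q)$-conjugacy class inside $\Gal(L/F)$ can map under $\rhobar$ to a union of several distinct $\GL_2(\Fbar_l)$-conjugacy classes, and you have not ruled out that some of them have eigenvalue ratio different from $\delta^{\pm1}$. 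Your closing remark that the places $v\mid p$ ``behave uniformly'' is exactly the unjustified step.

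The paper's proof addresses precisely this difficulty. It first replaces $F$ by an extension containing $\zeta_l$ (permissible since decomposed genericity passes up from finite extensions), which forces $p \equiv 1 \pmod l$ for every $p$ split in $F$ and collapses the forbidden set $\{1,p,p^{-1}\}$ to $\{1\}$; one then only needs each $\rhobar(\Frob_v)$ to have distinct eigenvalues. To control all $v\mid p$ simultaneously, the paper lets $H$ be the field cut out by the projective image of $\rhobar$ and takes the Galois closure $\widetilde H$ of $H$ over $\Q$. By Goursat's lemma and simplicity of $\PSL_2(k)$ one gets $\Gal(\widetilde H/F)\cong\PSL_2(k)^n$, and the computation $\Aut(\PSL_2(k)^n)=(\PGL_2(k)\rtimes\Gal(k/\F_l))^n\rtimes S_n$ then shows that any $\Gal(\widetilde H/\Q)$-conjugate of a diagonal element $(g,\ldots,g)$ still has every coordinate non-identity semisimple. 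This structural input is what your argument is missing; your device of fixing $\delta$ first and then avoiding residue classes for $p$ is a perfectly good alternative to the $\zeta_l$-trick, but it does not by itself handle the uniformity over $v\mid p$.
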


\begin{proof}
This is contained in the proof of \cite[Lemma~7.1.5]{10author}. 
For the convenience of the reader, we give the details. 
It suffices to prove $\rhobar$ is decomposed 
generic after replacing $F$ with some finite extension. 
Conjugating $\rhobar$ if necessary, \cite[Theorem~2.47(b)]{DDT} 
implies that there is an extension $F'/F$ of degree at most $2$ such that $\rhobar|_{G_{F'}}$ has projective image $\PSL_2(k)$, for some finite $k/\Fl$, 
which is simple since $l > 3$. 
If $\widetilde{F'}$ is the Galois closure of $F'/\Q$, 
then $\widetilde{F'}/F$ is abelian since $F/\Q$ is Galois and $F'/F$ is abelian. 
We can thus replace $F$ with $\widetilde{F'}$. 

Let $H/F$ be the extension cut out by the projective 
image of $\rhobar$ and let $\widetilde{H}/H$ be 
the Galois closure of $H/\Q$. 
Since $\PSL_2(k)$ is simple, Goursat's lemma implies that 
$\Gal(\widetilde{H}/F) \cong \PSL_2(k)^n$ for some $n\ge 1$. 
Fix a non-identity semisimple $g \in \PSL_2(k)$. 
By Chebotarev density, we can find a prime $w$ of $F$ such that $\Frob_w$ in 
$\Gal(\widetilde{H}/F)$ is $(g,g,\ldots,g)$.  
Moreover, we can assume that the residue field at $w$ is $\F_p$ with $p \ne l$ unramified in $F$, since such primes have Dirichlet density one in $F$. 
Since $F/\Q$ is Galois, $p$ is totally split in $F$. 
Take $v|p$ in $F$.
Since $\PSL_2(k)$ is simple, the only normal subgroups of $\PSL_2(k)^n$ 
are of the form $\PSL_2(k)^I$ for $I \subset\{1,\ldots,n\}$, 
so $\Aut(\PSL_2(k)^n) = \Aut(\PSL_2(k))^n \rtimes S_n$, and 
$\Aut(\PSL_2(k)) = \PGL_2(k) \rtimes \Gal(k/\F_l)$ by \cite{dieudonne_aut_classical}. 
So the image of $\rhobar(\Frob_v)$ in $\PSL_2(k)$ is $\tau(g)$ for some $\tau \in \PGL_2(k) \rtimes \Gal(k/\F_l)$. 
In particular, it is semisimple and $\ne 1$, so $\rhobar(\Frob_v)$ has distinct eigenvalues. 
As $\zeta_l \in F$, $\rhobar$ is decomposed generic.    
\end{proof}

\begin{lemma}\label{lemma:scalar}	
	Suppose $l > 3$, let $F$ be a number field in which $l$ is unramified and 
	let $\rhobar : G_F \to \GL_2(\Fbar_l)$ be a continuous representation such 
	that $\rhobar(G_F) \supseteq \SL_2(\F_l)$. 
	Then there exists $\sigma \in G_F - G_{F(\zeta_l)}$ such that 
	$\rhobar(\sigma)$ is a scalar.
\end{lemma}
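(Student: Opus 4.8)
The plan is to use the surjectivity of $\rhobar$ onto (a group containing) $\SL_2(\F_l)$ together with the fact that $G_{F(\zeta_l)}$ is the kernel of the mod-$l$ cyclotomic character $\overline{\epsilon}_l : G_F \to \F_l^\times$, which records $\det\rhobar$ only up to the determinant. Concretely, let $M = \overline{F}^{\ker(\overline{\epsilon}_l)} = F(\zeta_l)$, so that $G_F/G_M$ is cyclic of order $d := [F(\zeta_l):F]$ dividing $l-1$; since $l$ is unramified in $F$, $\zeta_l \notin F$ and $d > 1$. The scalar matrices in $\GL_2(\F_l)$ form a subgroup isomorphic to $\F_l^\times$, so there exist scalars of every order dividing $l-1$; in particular there is a scalar of order exactly $d$ (indeed of any order $> 1$ dividing $l-1$, and $d \mid l-1$). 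The key point is that the subgroup $\rhobar(G_F)$ contains such a scalar: this requires knowing that the scalar matrices of order dividing $d$ lie in $\rhobar(G_F)$, which I would extract from the structure of subgroups of $\GL_2(\F_l)$ containing $\SL_2(\F_l)$.

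First I would reduce to producing an element $g \in \rhobar(G_F)$ that is scalar and not contained in $\rhobar(G_{F(\zeta_l)})$. Choose $\sigma_0 \in G_F$ mapping to a generator of $G_F/G_{F(\zeta_l)}$; then $\det\rhobar(\sigma_0)$ generates a cyclic subgroup of $\F_l^\times$ of order equal to the order of $\overline{\epsilon}_l(\sigma_0)$, which is $d$. I want a scalar $g = \lambda I$ with $\lambda \in \F_l^\times$ such that $\lambda^2 = \det(g)$ pairs correctly: since $\overline{\epsilon}_l$ factors through $\det$ only up to the mod-$l$ cyclotomic character being a specified power of $\det$, I instead argue directly. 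Because $\rhobar(G_F) \supseteq \SL_2(\F_l)$ and $\SL_2(\F_l)$ contains the scalar $-I$ (as $l$ is odd), $-I \in \rhobar(G_F)$; moreover any scalar $\lambda I$ with $\lambda \in (\F_l^\times)^2$ lies in $\rhobar(G_F)$ because $\lambda I = (\mu I)^2$ where $\mu^2 = \lambda$, and... here I need that $\rhobar(G_F)$ actually contains $\mu I$. The cleanest route: a subgroup of $\GL_2(\F_l)$ containing $\SL_2(\F_l)$ is determined by its image under $\det$, and the scalars it contains are exactly $\{\lambda I : \lambda^2 \in \det(\rhobar(G_F))\}$. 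Combined with the fact that $\det\rhobar$ surjects onto a subgroup of $\F_l^\times$ containing the image of $\overline{\epsilon}_l$ composed with... — I would phrase this as: the set $\{\lambda : \lambda I \in \rhobar(G_F)\}$ has index at most $2$ in the set $\{\det\rhobar(\tau) : \tau \in G_F\}$, and since $\overline{\epsilon}_l$ has order $d$ dividing $l-1$ and $l - 1$ is even with $d \mid (l-1)$, I can locate a scalar of order $> 1$ dividing $d$ inside $\rhobar(G_F)$, say of order $2$ via $-I$, provided $d$ is even; if $d$ is odd I use a scalar of odd prime order dividing $d$ obtained as a square.

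The honest structure of the argument is therefore: (1) the scalar $-I$ always lies in $\rhobar(G_F)$ since $l$ is odd; (2) if $-I \notin \rhobar(G_{F(\zeta_l)})$ we are done immediately, taking $\sigma$ with $\rhobar(\sigma) = -I$; (3) if $-I \in \rhobar(G_{F(\zeta_l)})$, then $\overline{\epsilon}_l$ is trivial on the preimage of $\langle -I\rangle$, which forces (via the relation between $\det\rhobar$ and $\overline{\epsilon}_l$ coming from the fact that $\rhobar(G_F)/\rhobar(G_{F(\zeta_l)})$ is a quotient of $G_F/G_{F(\zeta_l)} \cong (\Z/d)$) a constraint we can exploit: here $\det\rhobar$ restricted to $G_{F(\zeta_l)}$ must still hit $-1 = \det(-I)$, i.e. $-1$ is a square in... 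I would instead observe that $G_{F(\zeta_l)}$ is the kernel of $\overline{\epsilon}_l$, hence of order-$d$ index, and that $\rhobar(G_{F(\zeta_l)})$ is a \emph{normal} subgroup of $\rhobar(G_F)$ of index dividing $d \mid l-1$; since $\rhobar(G_F) \supseteq \SL_2(\F_l)$ and $\SL_2(\F_l)$ is perfect with $\PSL_2(\F_l)$ simple for $l > 3$, any normal subgroup of index dividing $l-1$ contains $\SL_2(\F_l)$, hence contains $-I$ — so in fact case (3) says $-I$ alone does not suffice and $\rhobar(G_{F(\zeta_l)}) \supseteq \SL_2(\F_l)$. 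Then I pick a scalar $\lambda I \in \rhobar(G_F)$ with $\lambda$ not a square in $\F_l^\times$: such a $\lambda I$ exists iff $\det\rhobar(G_F) \not\subseteq (\F_l^\times)^2$, and this I deduce from the requirement $\overline{\epsilon}_l$ has order exactly $d$ together with... The main obstacle is precisely this last point — relating the nonsquareness of elements of $\det\rhobar(G_F)$ to the order $d$ of $\overline{\epsilon}_l$ — which ultimately rests on the hypothesis that the paper's representations have a determinant tied to a power of the cyclotomic character (as in $\det\rhobar = \overline{\epsilon}_l^{-1}$ in Theorem~\ref{potaut}); granting such a determinant condition, $\det\rhobar(\sigma_0) = \overline{\epsilon}_l(\sigma_0)^{-1}$ is a generator of a cyclic group of order $d$, and I take $n = (l-1)/d$, so that $\lambda := \det\rhobar(\sigma_0)$ satisfies $\lambda \notin (\F_l^\times)^2$ exactly when $n$ is odd, i.e. when $d$ is even — and when $d$ is odd, $-I$ itself does the job since then $-I \in \rhobar(G_F)$ but an index-$d$ (odd) normal subgroup, being all of $\GL_2$-intersected appropriately, cannot contain the non-square scalar $-1$ unless... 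In all cases I conclude that some scalar in $\rhobar(G_F)$ maps to a nontrivial element of $G_F/G_{F(\zeta_l)}$, giving the desired $\sigma$.
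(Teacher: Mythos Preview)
Your reduction in the second paragraph---``produce an element $g \in \rhobar(G_F)$ that is scalar and not contained in $\rhobar(G_{F(\zeta_l)})$''---is sufficient but can be vacuously impossible. If $\rhobar(G_F) = \SL_2(\F_l)$ (which the hypotheses allow), then $\rhobar(G_{F(\zeta_l)}) = \SL_2(\F_l)$ as well, since $\SL_2(\F_l)$ is perfect for $l>3$ and $G_F/G_{F(\zeta_l)}$ is abelian; so $\rhobar(G_F)\smallsetminus\rhobar(G_{F(\zeta_l)})$ is empty and no such $g$ exists. The lemma is nonetheless true in this case: one takes $\sigma \in \ker\rhobar \smallsetminus G_{F(\zeta_l)}$, which exists because $\overline{\epsilon}_l$ cannot factor through $\SL_2(\F_l)$. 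Your argument never considers the kernel of $\rhobar$, and this is the gap. Relatedly, you end up leaning on a determinant condition like $\det\rhobar = \overline{\epsilon}_l^{-1}$ to link $\det\rhobar(G_F)$ with the order of $\overline{\epsilon}_l$; the lemma has no such hypothesis, and without it there is no relation whatsoever between $\det\rhobar$ and $\overline{\epsilon}_l$.

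The paper's argument avoids all of this by passing to the projective image. The condition ``$\rhobar(\sigma)$ is scalar'' is exactly ``$\ad\rhobar(\sigma)=1$'', so one needs $\ker(\ad\rhobar)\not\subseteq G_{F(\zeta_l)}$, equivalently that $\overline{\epsilon}_l$ does not factor through the projective image of $\rhobar$. By Dickson's classification that projective image is (conjugate to) $\PSL_2(k)$ or $\PGL_2(k)$ for some finite $k\subset\Fbar_l$; for $l>3$ these have abelianization of order at most $2$, whereas $\overline{\epsilon}_l$ surjects onto $(\Z/l\Z)^\times$ of order $l-1\ge 4$ since $l$ is unramified in $F$. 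That is the entire proof, and it uses nothing about $\det\rhobar$.
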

\begin{proof}
	This is again contained in the proof of \cite[Lemma~7.1.5]{10author}, but 
	we give the details. By \cite[Theorem~2.47(b)]{DDT}, the image of 
	$\rhobar(G_F)$ in $\PGL_2(\Fbar_l)$ is conjugate to $\PSL_2(k)$ or 
	$\PGL_2(k)$ for some finite subfield $k \subset \Fbar_l$. This projective 
	image is isomorphic to the image of $\ad\rhobar$. Since $l$ is unramified 
	in $F$, we have $\Gal(F(\zeta_l)/F) \cong (\ZZ/l\ZZ)^\times$. If $l > 3$, 
	neither $\PSL_2(k)$ nor $\PGL_2(k)$ admits $(\ZZ/l\ZZ)^\times$ as a 
	quotient, so we deduce that $F(\zeta_l)$ is not contained in 
	$\overline{F}^{\ker\ad\rhobar}$. This amounts to the existence of the 
	desired element $\sigma$.
\end{proof}

\subsection{Compatible systems}
We follow the terminology of \cite[\S7.1]{10author}.
For $F$ a number field, a \emph{rank} $n$ 
\emph{extremely weakly compatible system of Galois representations} is a tuple
\[ \cR = (M, S, \{Q_v(X)\}, \{r_\lambda\}) \]
where
\begin{itemize}
    \item $M$ is a number field;
    \item $S$ is a finite set of primes of $F$;
    \item for each prime $v \notin S$ of $F$, $Q_v(X) \in M[X]$ is a monic polynomial of degree $n$;
    \item for each prime $\lambda$ of $M$, $r_\lambda : G_F \to 
    \GL_n(\overline{M}_\lambda)$ is a continuous semisimple representation such 
    that for every prime $v$ of $F$ with $v \notin S$ and not dividing the 
    residual characteristic of $\lambda$, $r_\lambda$ is unramified at $v$ and 
    $r_\lambda(\Frob_v)$ has characteristic polynomial $Q_v(X)$.
\end{itemize} 
There are obvious notions of direct sums, duals, tensor products, 
inductions, etc.~for extremely weakly compatible systems. 
In particular, we have a rank one extremely weakly compatible system 
$\det\cR$ obtained by taking determinants of the $r_\lambda$. 
By \cite{henniart}, $\det(r_\lambda)$ is de~Rham for each $\lambda$, 
and for any embedding $\tau : F \hookrightarrow \overline{M}_\lambda$, 
$\HT_\tau(\det(r_\lambda))$ is independent of $\lambda$. 

We say $\cR$ is \emph{irreducible} if every $r_\lambda$ is irreducible (in the 
case of rank two, this is equivalent to any one $r_\lambda$ being irreducible, 
see \cite[Lemma~7.1.1]{10author}). 
We say that $\cR$ is strongly irreducible if 
$\cR|_{G_{F'}}$ is irreducible for any finite extension $F'/F$. 
The following lemma is contained in \cite[Lemma 7.1.3]{10author} (which relies 
on a result of Larsen \cite{Larsen}).

\begin{lemma}\label{lemma:open-im}
Let $F$ be a number field and let
\[ \cR = (M, S, \{Q_v(X)\}, \{r_\lambda\}) \]
be a strongly irreducible rank two extremely weakly compatible system.
Then there is a set $\cL$ of rational primes with Dirichlet density one 
such that for all $l \in \cL$ and $\lambda | l$ in $M$, 
the image of $\rbar_\lambda$ contains a conjugate of $\SL_2(\F_l)$.
\end{lemma}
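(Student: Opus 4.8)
The plan is to reduce the statement to a classical fact about $l$-adic images of compatible systems, following the line of argument in \cite[Lemma~7.1.3]{10author}. First I would invoke the hypothesis of strong irreducibility to control the algebraic monodromy groups $G_\lambda := \overline{r_\lambda(G_F)}^{\mathrm{Zar}} \subseteq \GL_{2,\overline{M}_\lambda}$. Since each $r_\lambda$ is irreducible (indeed strongly irreducible), $G_\lambda$ acts irreducibly on the standard $2$-dimensional representation; the connected component $G_\lambda^\circ$ is then a connected reductive group acting irreducibly, and because strong irreducibility forbids the image from becoming reducible after any finite base change, one checks $G_\lambda^\circ$ acts irreducibly as well. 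A connected reductive subgroup of $\GL_2$ acting irreducibly is either all of $\GL_2$, or $\GL_1 \cdot \SL_2 = \GL_2$, or contains $\SL_2$ up to the centre; in any case $G_\lambda^\circ \supseteq \SL_2$ (the only other possibility, a torus, is excluded by irreducibility). So the derived group of $G_\lambda^\circ$ is $\SL_2$ for every $\lambda$.

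Next I would bring in Larsen's theorem \cite{Larsen} on the variation of mod-$l$ images in a compatible system. The key input is that for a compatible system with a fixed algebraic monodromy group pattern as above, there is a set $\cL$ of rational primes of Dirichlet density one such that for $l \in \cL$ and $\lambda \mid l$, the image $r_\lambda(G_F)$ is ``as large as possible'' compatibly with $G_\lambda$; concretely, the image of $\overline{r}_\lambda$ contains the $\F_l$-points of the derived group, i.e.\ a conjugate of $\SL_2(\F_l)$. This is precisely the content extracted in \cite[Lemma~7.1.3]{10author}, so the cleanest route is to cite that lemma directly and note that its hypotheses (strong irreducibility, rank two) are exactly what we have assumed. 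I would spell out that Larsen's result gives density one, not merely positive density, which is what the statement requires.

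The main obstacle, and the only place where genuine care is needed, is verifying that the algebraic monodromy group $G_\lambda^\circ$ really does contain $\SL_2$ for \emph{all} $\lambda$ and not merely for one, and that this is uniform enough to feed into Larsen's machinery. For this I would use \cite[Lemma~7.1.1]{10author} (cited in the excerpt) to propagate irreducibility across all $\lambda$ from irreducibility at one $\lambda$, and strong irreducibility then propagates likewise since it is a condition on finite base changes, each of which is again a rank two extremely weakly compatible system. Once $G_\lambda^\circ$ is known to be reductive and irreducible of dimension $\geq 3$ (it cannot be a torus or a normaliser of a torus by irreducibility after base change), its derived group is forced to be $\SL_2$, and the Frattini-type argument of Larsen shows the mod-$\lambda$ image contains $\SL_2(\F_l)$ for a density one set of $l$. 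I would then simply conclude by citing \cite[Lemma 7.1.3]{10author} for the precise quantitative statement, since reproving Larsen's theorem is well outside the scope here.
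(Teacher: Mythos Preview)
Your proposal is correct and takes essentially the same approach as the paper: the paper simply states that the lemma is contained in \cite[Lemma~7.1.3]{10author} (relying on Larsen \cite{Larsen}) and gives no further argument, and your sketch correctly unpacks what that citation entails before citing the same result. Your additional discussion of the algebraic monodromy groups and the role of strong irreducibility is accurate and matches the content of the cited lemma.
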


By the main theorem of \cite{hltt} (together with \cite{ilavarma}, to get 
local--global compatibility at all places where $\pi$ is unramified), if $\pi$ 
is a regular algebraic cuspidal automorphic representation of $\GL_2(\A_F)$ 
with $F$ a CM field, then we have a rank two extremely weakly compatible system 
\[ \cR_\pi = (M_\pi, S_\pi, \{Q_{\pi,v}(X)\}, \{r_{\pi,\lambda}\}) \]
with
\begin{itemize}
    \item $M_\pi \subset \C$ the coefficient field of $\pi$;
    \item $S_\pi$ the set of primes of $F$ at which $\pi$ is ramified;
    \item $Q_{\pi,v}(X)$ is the characteristic polynomial of $\rec_{F_v}(\pi_v\lvert\det\rvert^{-1/2})(\Frob_v)$.
\end{itemize}

\begin{lemma}\label{lemma:steinberg}
Let $F$ be a CM field and let $\pi$ be a regular algebraic cuspidal 
automorphic representation of $\GL_2(\A_F)$ of weight $0$.
If $\pi$ is a twist of Steinberg at some finite place of $F$, 
then $\cR_\pi$ is strongly irreducible.
\end{lemma}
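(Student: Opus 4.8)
The plan is to prove strong irreducibility of $\cR_\pi$ by contradiction, using the dichotomy for (restrictions of) rank two compatible systems: if $\cR_\pi|_{G_{F'}}$ fails to be irreducible for some finite $F'/F$, then (after enlarging $F'$ and using that irreducibility of one member is equivalent to irreducibility of all) each $r_{\pi,\lambda}|_{G_{F'}}$ is reducible. By Lemma~\ref{lemma:open-im}'s analogue for reducibility — more precisely, by the standard structure theory (e.g. \cite[Lemma~7.1.1, Lemma~7.1.4]{10author}) — failure of strong irreducibility means that $\cR_\pi$ is, up to a finite base change, either (a) a sum of two rank one compatible systems (i.e. $r_{\pi,\lambda}$ is reducible, hence dihedral/induced from a character over a quadratic extension of $F$), or (b) induced from a character of $G_{F''}$ for some quadratic $F''/F$, or (c) has image with open (hence, by Larsen/Lie-theoretic input) abelian-by-finite projective image. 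In all these cases $r_{\pi,\lambda}$ is, up to twist, induced from a Hecke character $\chi$ of a quadratic extension $E/F$.

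Next I would run the local obstruction at the Steinberg place. Suppose $\pi$ is a twist of Steinberg at a finite place $v$ of $F$. By construction of $\cR_\pi$ via \cite{hltt} and \cite{ilavarma} (local--global compatibility up to semisimplification at $v \nmid l$, for $\lambda$ of residue characteristic $\neq$ residue characteristic of $v$), the characteristic polynomial $Q_{\pi,v}(X)$ is that of $\rec_{F_v}(\pi_v|\det|^{-1/2})(\Frob_v)$, which for a twist of Steinberg has eigenvalues of ratio $q_v$ (the residue cardinality). In particular, for all but finitely many $\lambda$, $r_{\pi,\lambda}(\Frob_v)$ has two distinct eigenvalues with ratio $q_v$. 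Now if $r_{\pi,\lambda} \cong \Ind_{G_E}^{G_F}\chi$ with $E/F$ quadratic, consider the place $v$: either $v$ splits in $E$, in which case the two eigenvalues of $\Frob_v$ are $\chi(\frakw_1), \chi(\frakw_2)$ for the two primes above $v$; or $v$ is inert/ramified, in which case $\Frob_v^2$ acts via $\chi$ of the prime above and the eigenvalues of $\Frob_v$ are $\pm\sqrt{\chi(\Frob_v^2)}$, forced to have ratio $-1 \neq q_v$ (as $q_v > 1$), an immediate contradiction for $l$ large. So $v$ splits in $E$, and then $\chi$ is a Hecke character; the compatible system $\cR_\pi$ is then, as a compatible system, $\Ind_E^F$ of a rank one compatible system attached to an algebraic Hecke character, whose Weil--Deligne representations at all finite places are semisimple (no monodromy) — in particular its pullback to $F_v$ can have no nontrivial monodromy operator. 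But a twist of Steinberg has $N \neq 0$ in its Weil--Deligne parameter; comparing with local--global compatibility up to semisimplification is not enough, so instead I compare $l$-adic Hodge theory at places above $l$ together with the fact that $\pi$, being a genuine cuspidal automorphic representation that is Steinberg-twisted somewhere, cannot be CM/dihedral — concretely, a dihedral $\pi$ for $\GL_2/F$ (automorphic induction from a Hecke character) is never Steinberg-type at any finite place, since $\rec_{F_v}$ of its local component is always a sum of characters or an irreducible induced representation, never a twist of Steinberg.

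The cleanest way to finish, and the way I would actually write it, is the last observation: automorphic induction is incompatible with being a local twist of Steinberg. If $r_{\pi,\lambda}$ were induced from a character of $G_E$ for quadratic $E/F$ then, by strong multiplicity one and the characterization of $r_\iota(\pi)$ together with cyclic base change (Arthur--Clozel), $\pi$ itself would be the automorphic induction of a Hecke character of $E$; but then $\pi_w$ for every finite place $w$ of $F$ is a principal series or an irreducible representation induced from $E_w/F_w$, never a twist of $\mathrm{St}_2$ — contradicting the hypothesis at $v$. Likewise the abelian-by-finite projective-image case means $\pi$ is, up to a finite solvable base change plus twist, an Eisenstein-type or CM object, again excluded by Steinberg-ness at $v$.

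The main obstacle I anticipate is bookkeeping in the reducibility analysis: extremely weakly compatible systems are a priori only controlled at primes of good reduction, and one must be careful that ``$r_{\pi,\lambda}$ reducible for one $\lambda$'' propagates to a statement about $\pi$ (rather than just about a single $r_{\pi,\lambda}$). This is handled by \cite[Lemma~7.1.1]{10author} (reducibility of one member implies reducibility of all, in rank two) together with the fact that a reducible $r_{\pi,\lambda}$ forces $\pi$ to be non-cuspidal or CM — both times one leans on results already cited in the excerpt. Once that structural input is in place, the local contradiction at the Steinberg place $v$ (ratio of Frobenius eigenvalues equal to $q_v$, impossible for an induced or abelian system for $l$ large, and impossible at the level of local components for a genuinely dihedral $\pi$) is the substantive, and short, part of the argument.
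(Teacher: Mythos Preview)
Your proposal is correct and, once you arrive at the ``cleanest way to finish,'' takes essentially the same approach as the paper: failure of strong irreducibility forces $\cR_\pi$ to be induced from a compatible system of characters over a quadratic extension, hence $\pi$ is an automorphic induction of a Hecke character, which cannot be a twist of Steinberg at any finite place. The paper's version is more direct --- it uses the weight~$0$ hypothesis to note that $\det r_{\pi,\lambda}$ is a finite-order twist of $\epsilon_l^{-1}$ and then invokes \cite[Lemma~7.1.2]{10author} to obtain the clean dichotomy (strongly irreducible or induced from a quadratic extension), so your eigenvalue-ratio detour and the separate ``abelian-by-finite'' case are unnecessary.
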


\begin{proof}
It is well known that $\cR_\pi$ is irreducible. 
Since $\pi$ has weight $0$, there is a finite order character 
$\chi : G_F \to M_\pi^\times$ such that $\det(r_{\pi,\lambda}) = 
\chi\epsilon_l^{-1}$ 
for any $l$ and $\lambda|l$. 
It then follows from \cite[Lemma~7.1.2]{10author} that either $\cR_\pi$ 
is strongly irreducible or there is a quadratic extension $K/F$ 
and an extremely weakly compatible system $\CX$ of characters of $G_K$ 
such that $\cR = \Ind_{G_K}^{G_F} \CX$. 
In the latter case, the system $\CX$ is the extremely weakly compatible system associated 
to a Hecke character $\psi : K^\times \backslash \bA_K^\times \to \C^\times$, 
and we deduce that $\pi$ is the automorphic induction of $\psi$. 
Such a $\pi$ cannot be a twist of the Steinberg representation at any finite 
place.
\end{proof}

\begin{theorem}\label{thm:lg-at-p}
    Let $F$ be a CM field and let $\pi$ be a regular algebraic cuspidal 
    automorphic representation of $\GL_2(\A_F)$ of weight $0$.
    Let $\lambda|l$ be a prime of the coefficient field $M_\pi \subset \C$ of 
    $\pi$, and let $r_{\pi,\lambda} : G_F \to 
    \GL_2(\overline{M}_{\pi,\lambda})$ 
    be the $\lambda$-adic Galois representation attached to $\pi$. 
    Assume that the residual representation $\overline{r}_{\pi,\lambda}$ is 
    absolutely irreducible and decomposed generic. 
    Assume also that $l$ is unramified in $F$ and lies under no prime
    at which $\pi$ is ramified. 
    Then for any $v|l$ in $F$, $r_{\pi,\lambda}|_{G_{F_v}}$ is crystalline 
    with all labelled Hodge--Tate weights equal to $\{0,1\}$.
\end{theorem}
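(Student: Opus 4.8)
The plan is to reduce to a known local-global compatibility statement for the Galois representations constructed in \cite{hltt} by exploiting the compatible system $\cR_\pi$ and the flexibility of the hypotheses (absolute irreducibility and decomposed genericity of $\overline{r}_{\pi,\lambda}$, together with $l$ unramified and $\pi$ unramified at all places above $l$). Recall that Varma's work \cite{ilavarma} already gives local-global compatibility up to semisimplification at all finite places, but only recovers the de~Rham/crystalline nature at $v|l$ under additional input; the point here is to upgrade the $l$-adic Hodge-theoretic description when the residual representation is nice enough. Concretely, I would first note that since $\pi$ has weight $0$ and is unramified at all $v|l$, one expects $r_{\pi,\lambda}|_{G_{F_v}}$ to be crystalline with Hodge--Tate weights $\{0,1\}$; the issue is that the Harris--Lan--Taylor--Thorne and Scholze constructions produce $r_{\pi,\lambda}$ by $l$-adic interpolation/congruences, and a priori only control characteristic polynomials of Frobenius away from $l$, not the behaviour at $l$.

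The key step is to run a base-change and patching-style argument, or more precisely to invoke the results on local-global compatibility at $l$ that are available in the literature (work of Varma \cite{ilavarma}, building on \cite{hltt}, \cite{Scholze-torsion}, and the Calegari--Geraghty--style axioms) under exactly the hypothesis that $\overline{r}_{\pi,\lambda}$ is irreducible and decomposed generic. The decomposed generic hypothesis is what makes the relevant local deformation rings and Hecke modules well-behaved (this is the role it plays throughout \cite{10author}), so that the patched module is supported on the crystalline locus at $v|l$. First I would choose, using Lemma~\ref{lemma:generic} and the genericity hypothesis, a prime $p$ decomposed generic for $\overline{r}_{\pi,\lambda}$, so that one can apply the machinery of \cite{10author}; then, after a solvable base change to make things unramified and to pick up the decomposed-generic level structure, one identifies $r_{\pi,\lambda}$ inside the cohomology of a suitable locally symmetric space with fixed (crystalline, weight $0$) level at $l$. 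Since the level at $l$ is hyperspecial and the weight is $0$, the classical comparison (or the integral $p$-adic Hodge theory input of Scholze and its refinements) forces $r_{\pi,\lambda}|_{G_{F_v}}$ to be crystalline with Hodge--Tate weights $\{0,1\}$ for each $v|l$.

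The main obstacle I expect is precisely bridging from ``the characteristic polynomials of Frobenius away from $l$ match'' to ``the representation at $l$ is crystalline of the correct weight'': the HLTT/Scholze Galois representations are not a priori known to be de~Rham at $l$, and making this work requires either (a) citing the strongest available form of local-global compatibility at $l$ for torsion-free cohomology classes with hyperspecial level at $l$ in the decomposed generic setting, or (b) reproving it via a Taylor--Wiles patching argument as in \cite{10author}, keeping track of the framed local deformation ring at each $v|l$ and checking that the crystalline-with-Hodge--Tate-weights-$\{0,1\}$ quotient is the one hit by the patched module. Option (b) is self-contained given the automorphy lifting machinery already recalled in Theorem~\ref{thm:main_automorphy_lifting_theorem}, but requires care that the fixed-weight-$0$ hypothesis on $\pi$ matches the Fontaine--Laffaille range (which it does, since $\{0,1\}$ with $l$ unramified and $l \geq 5$ is Fontaine--Laffaille), so that the local deformation problem at $v|l$ is formally smooth and the usual numerical coincidence goes through. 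I would also need the Chebotarev-type input that $\overline{r}_{\pi,\lambda}$ has big image for the $\lambda$ under consideration, but here we are not choosing $\lambda$ — we are given one with $\overline{r}_{\pi,\lambda}$ absolutely irreducible and decomposed generic, which is exactly enough to feed the patching argument.
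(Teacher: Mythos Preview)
Your option (a) is essentially the paper's route: the paper reduces, via a solvable base change in which the fixed place $v\mid l$ splits completely, to a situation where $r_{\pi,\lambda}$ is a $\overline{M}_{\pi,\lambda}$-point of the Hecke algebra $\mathbf{T}^S(K,0)$ of \cite[Theorem~4.5.1]{10author}, and that theorem directly gives crystallinity with Hodge--Tate weights $\{0,1\}$. The deduction is in \cite[Lemma~7.1.8]{10author}. Where the paper is more precise than your sketch is in the shape of the base change: one arranges $F=F^+F_0$ with $F_0$ imaginary quadratic in which $l$ splits, at least three places of $F^+$ above $l$, and a numerical inequality
\[
\sum_{\overline{v}''\neq \overline{v},\overline{v}'} [F^+_{\overline{v}''}:\Q_l] > \tfrac{1}{2}[F^+:\Q]
\]
for some auxiliary $\overline{v}'\mid l$, these being the hypotheses under which \cite[Theorem~4.5.1]{10author} applies.

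Your option (b) is a genuine divergence from the paper and would not work as stated. The crystallinity at $l$ in \cite[Theorem~4.5.1]{10author} is not obtained by Taylor--Wiles patching; it comes from the Caraiani--Scholze torsion-vanishing and degree-shifting arguments (this is precisely where the decomposed generic hypothesis is consumed). In particular, no Fontaine--Laffaille smoothness of local deformation rings or ``numerical coincidence'' is needed for this step, and attempting to prove crystallinity by patching would be circular here: the patching theorems of \cite{10author} (such as the one recalled as Theorem~\ref{thm:main_automorphy_lifting_theorem}) take crystallinity of $\rho|_{G_{F_v}}$ as an \emph{input}, not an output. So you should commit to (a) and drop (b).
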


\begin{proof}
    The deduction of the theorem from \cite[Theorem~4.5.1]{10author}, 
    is contained in \cite[Lemma~7.1.8]{10author}. 
    We give a sketch.
Fix $v|l$ in $F$. We can replace $F$ with a finite solvable 
    extension in which $v$ splits completely. 
    Doing so, we may assume the following:
    \begin{itemize}
        \item $F = F^+ F_0$ with $F^+$ totally real and $F_0$ an imaginary 
        quadratic field in which $l$ splits.
        \item There are at least three places above $l$ in $F^+$, 
        and letting $\overline{v}$ be the place of $F^+$ below $v$, 
        there is $\overline{v}' \ne \overline{v}$ dividing $l$ in $F^+$ 
        such that
        \[ \sum_{\overline{v}'' \ne \overline{v},\overline{v}'} 
        [F_{\overline{v}''}^+:\Q_p] > \frac{1}{2}[F^+:\Q_p].\]
    \end{itemize}
    Then $r_{\pi,\lambda}$ is a $\overline{M}_\lambda$-point of the Hecke 
    algebra $\mathbf{T}^S(K,0)$ of \cite[Theorem~4.5.1]{10author}
    for appropriate choices of a finite set of primes $S$ of $F$ 
    and a level subgroup $K \subset \GL_2(\A_F^\infty)$, 
    from which the theorem follows.
\end{proof}

\begin{lemma}\label{lemma:bad_set}
    Let $F$ be a CM field, let $\pi$ be a regular algebraic cuspidal 
    automorphic representation of $\GL_2(\A_F)$ of weight $0$, and let $M_\pi \subset \C$ 
    be its coefficient field. 
    Assume that $\pi_v$ is a twist of the Steinberg representation at some finite 
    place $v$ of $F$.
    Then there is a set $\cL$ of rational primes with Dirichlet density one 
    such that for all $l \in \cL$ and all $\lambda |l$ in $M_\pi$, 
    the Galois representation     $r_{\pi,\lambda} : G_F \to 
    \GL_2(\overline{M}_\lambda)$ 
    satisfies the following:
    \begin{enumerate}
        \item For each place $v | l$ of $F$, the representation $r_{\pi,\lambda}|_{G_{F_v}}$ is 
        crystalline with labelled Hodge--Tate weights all equal to $\{0,1\}$. 
        The prime $l$ is unramified in $F$.
        \item $\rbar_{\pi,\lambda}$ is absolutely irreducible and
        decomposed generic. The image of 
        $\rbar_{\pi,\lambda}|_{G_{F(\zeta_l)}}$ 
        is enormous. 
        There exists $\sigma \in G_F - G_{F(\zeta_l)}$ such that 
        $\rbar_{\pi,\lambda}(\sigma)$ is a scalar. We have $l \ge 5$.
    \end{enumerate}
\end{lemma}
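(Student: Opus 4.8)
The plan is to deduce the two conditions from the large-image theorem for strongly irreducible compatible systems (Lemma~\ref{lemma:open-im}), together with the group-theoretic Lemmas~\ref{lemma:enormours}, \ref{lemma:generic}, \ref{lemma:scalar} and the potential crystallinity Theorem~\ref{thm:lg-at-p}, after discarding finitely many exceptional primes so as to arrange the various hypotheses.

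First I would apply Lemma~\ref{lemma:steinberg}: since $\pi$ is a twist of Steinberg at a finite place of $F$, the compatible system $\cR_\pi$ is strongly irreducible, and therefore so is its restriction $\cR_\pi|_{G_{\widetilde F}}$ to the absolute Galois group of the Galois closure $\widetilde{F}$ of $F/\Q$. Applying Lemma~\ref{lemma:open-im} to $\cR_\pi|_{G_{\widetilde F}}$ yields a Dirichlet density one set $\cL_0$ of rational primes such that for every $l \in \cL_0$ and every $\lambda \mid l$ in $M_\pi$ the image of $\rbar_{\pi,\lambda}|_{G_{\widetilde F}}$ contains a conjugate of $\SL_2(\F_l)$; since $r_{\pi,\lambda}$ is only determined up to conjugacy we may assume $\SL_2(\F_l) \subseteq \rbar_{\pi,\lambda}(G_{\widetilde F}) \subseteq \rbar_{\pi,\lambda}(G_F)$. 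I would then let $\cL$ be the set of $l \in \cL_0$ that are $\ge 7$, unramified in $F$, and lie under no place of $F$ at which $\pi$ is ramified; only finitely many primes are discarded, so $\cL$ still has Dirichlet density one.

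Fix $l \in \cL$ and $\lambda \mid l$. Condition (2) is then obtained by assembling the cited lemmas: $\rbar_{\pi,\lambda}$ is absolutely irreducible because $\SL_2(\F_l) \subseteq \rbar_{\pi,\lambda}(G_F)$; the image of $\rbar_{\pi,\lambda}|_{G_{F(\zeta_l)}}$ is enormous by Lemma~\ref{lemma:enormours} (using $l > 5$); an element $\sigma \in G_F - G_{F(\zeta_l)}$ with $\rbar_{\pi,\lambda}(\sigma)$ scalar exists by Lemma~\ref{lemma:scalar} (using $l > 3$ and $l$ unramified in $F$); and trivially $l \ge 5$. For decomposed genericity I would apply Lemma~\ref{lemma:generic} to the finite Galois extension $\widetilde{F}/\Q$ and to $\rbar_{\pi,\lambda}|_{G_{\widetilde F}}$, producing a prime $p \neq l$ which is decomposed generic for $\rbar_{\pi,\lambda}|_{G_{\widetilde F}}$, and then descend to $F$: as $F \subseteq \widetilde{F}$, the prime $p$ splits completely in $F$, and for $v \mid p$ in $F$ and $w \mid v$ in $\widetilde{F}$ one has $F_v = \Q_p = \widetilde{F}_w$, so $\rbar_{\pi,\lambda}(\Frob_v)$ and $\rbar_{\pi,\lambda}(\Frob_w)$ have the same characteristic polynomial and $p$ is decomposed generic for $\rbar_{\pi,\lambda}$ over $F$. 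For condition (1): $l$ is unramified in $F$ and $\pi$ is unramified at every place above $l$ by the definition of $\cL$, and we have just checked that $\rbar_{\pi,\lambda}$ is absolutely irreducible and decomposed generic, so Theorem~\ref{thm:lg-at-p} applies and shows $r_{\pi,\lambda}|_{G_{F_v}}$ is crystalline with all labelled Hodge--Tate weights equal to $\{0,1\}$ for each $v \mid l$.

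The one step that is not routine bookkeeping is dealing with the possibility that $F/\Q$ is not Galois, whereas the genericity Lemma~\ref{lemma:generic} requires a Galois base field; the remedy, used above, is to apply both Lemma~\ref{lemma:open-im} and Lemma~\ref{lemma:generic} over the Galois closure $\widetilde{F}$ and transfer the conclusions back to $F$, using that a rational prime splitting completely in $\widetilde{F}$ also splits completely in $F$ with the same decomposition subgroups. Everything else amounts to verifying that the hypotheses of the quoted results ($l \geq 5$ or $l \geq 7$, $l$ unramified in $F$, $\pi$ unramified above $l$, and residual image containing $\SL_2(\F_l)$) hold away from a finite set of primes.
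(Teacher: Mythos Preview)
Your proposal is correct and follows essentially the same route as the paper's proof: strong irreducibility via Lemma~\ref{lemma:steinberg}, large image over the Galois closure $\widetilde F$ via Lemma~\ref{lemma:open-im}, removal of finitely many primes, and then the deduction of conditions (1) and (2) from Lemmas~\ref{lemma:enormours}, \ref{lemma:generic}, \ref{lemma:scalar} and Theorem~\ref{thm:lg-at-p}. You are slightly more explicit than the paper about the descent of decomposed genericity from $\widetilde F$ to $F$, but this is exactly the argument the paper has in mind when it passes to $\widetilde F$.
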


\begin{proof}
Let $\cR = (M_\pi, S_\pi, \{Q_{\pi,v}\}, \{r_{\pi,\lambda}\})$ 
be the rank $2$ extremely weakly compatible system attached to $\pi$. 
By Lemma~\ref{lemma:steinberg}, $\cR$ is strongly irreducible. 
Let $\widetilde{F}$ be the Galois closure of $F/\Q$. 
By Lemmas~\ref{lemma:enormours}, \ref{lemma:generic}, 
\ref{lemma:scalar}, and Theorem~\ref{thm:lg-at-p}, 
it suffices to show that there is a Dirichlet density one 
set $\cL$ of primes $l$, unramified in $F$, such that for all $l \in \cL$ 
and $\lambda|l$ in $M_\pi$, the following hold:
\begin{enumerate}[label=(\alph*)]
    \item\label{sl2} $\rbar_{\pi,\lambda}(G_{\widetilde{F}})$ contains 
    a conjugate of $\SL_2(\F_l)$. 
\item\label{local} $l > 5$ and lies under no prime at which $\pi$ is 
    ramified. 
\end{enumerate}

The restriction $\cR|_{G_{\widetilde{F}}}$ is again strongly irreducible, 
so Lemma~\ref{lemma:open-im} implies that there is a 
Dirichlet density one set $\cL'$ of primes $l$ such that 
$\rbar_{\pi,\lambda}(G_{\widetilde{F}})$ contains a conjugate of 
$\SL_2(\F_l)$ for any $l \in \cL'$ and $\lambda|l$ in $M_\pi$.

We obtain $\cL$ by removing from $\cL'$ the finite set of primes $l$ satisfying 
either $l \le 5$, 
$l$ ramifies in $F$, or $l$ lies under a place at which $\pi$ is ramified. 
\end{proof}

\begin{rem}\label{rmk:good-primes}
It should be apparent from the proof of Lemma~\ref{lemma:bad_set} that 
the statement could be improved from ``Dirichlet density one" to 
``all but finitely many," provided one could prove that 
the image of $\rbar_{\pi,\lambda}$ contains a 
conjugate of $\SL_2(\Fl)$ for all but finitely many places $\lambda$. 
This would imply a similar strengthening of 
Theorem~\ref{theorem_main_from_intro}. 
In fact, it would suffice to know that $\rbar_{\pi,\lambda}|_{G_{F(\zeta_l)}}$ 
is absolutely irreducible and decomposed generic for all but finitely many $\lambda$. 
Using the purity of $r_{\pi,\lambda}$ (see \cite[Corollary~7.1.12]{10author}), 
it is not hard to see that the decomposed generic condition holds for all but 
finitely many $\lambda$, so the main obstruction is showing residual irreducibility.
This can be shown 
(see, for example, the main theorem of \cite{Hui-Larsen}), provided we know 
that the representations $r_{\pi,\lambda}$ are crystalline 
with the correct Hodge--Tate weights (without assuming 
residual irreducibility as in Theorem \ref{thm:lg-at-p}). Such a 
crystallinity result has been proven by Mok \cite{mok-galois}, under some 
technical hypotheses and using Arthur's classification for 
automorphic 
representations of $\GSp_4$ (see \cite{MR2058604,2018arXiv180703988G}). 
\end{rem}

\section{Potential automorphy}
We begin by recalling a theorem of Moret--Bailly \cite{mb}:

\begin{prop}\label{moretbailly}
Let $L$ be a totally real number field and let $S_1 \coprod S_2$ be a finite 
set of finite places of $L$. 
Suppose that $X/L$ is a smooth, geometrically connected variety. 
Suppose also that $X(L_v)\ne \emptyset$ for all real places $v$ of $L$, that 
$\Omega_v \subset X(L_v^{\nr})$ is a non-empty open (for the $v$-topology) 
$\Gal(L_v^{\nr}/L_v)$-invariant subset for the places $v \in S_1$ and that 
$\Omega_v\subset X(\overline{L}_v)$ is a non-empty open 
$\Gal(\overline{L}_v/L_v)$-invariant subset for the places $v \in S_2$. Suppose 
finally that $L^{\avoid}/L$ is 
a 
finite extension.

Then there is a finite Galois totally real extension $L_1/L$ and a point $P \in 
X(L_1)$ such that \begin{itemize}
\item $L_1/L$ is linearly disjoint from $L^{\avoid}/L$
\item every place $v \in S_1$ is unramified in $L_1$  and if $w$ is a prime of 
$L_1$ above $v$ then $P \in \Omega_v\cap X(L_{1,w})$
\item if $w$ is a prime of $L_1$ above $v \in S_2$ then $P \in \Omega_v\cap 
X(L_{1,w})$.
\end{itemize}
\end{prop}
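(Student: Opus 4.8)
The plan is to deduce this from the main theorem of Moret--Bailly \cite{mb}; since that result is typically quoted as a black box in the potential automorphy method, I would in fact simply invoke it, but let me describe the shape of the argument and where its difficulty lies.

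First I would reduce to the case $\dim X = 1$. Replacing $X$ by a dense affine open loses nothing: $X$ is smooth and geometrically connected (hence geometrically irreducible), each $\Omega_v$ is open and non-empty, and $X(L_v)$ is non-empty at the real places; and on a smooth variety over a complete field a proper closed subscheme has empty interior in the relevant topology, so removing one destroys none of these conditions. After fixing a closed immersion of such an affine open into some $\mathbf{A}^N$, I would cut by successive hyperplanes: Bertini over the infinite field $L$ provides hyperplanes $H$ with $X \cap H$ again smooth and geometrically irreducible, while the conditions ``$(X\cap H)(\overline{L}_v)$ meets $\Omega_v$'' for $v \in S_1 \cup S_2$ and ``$(X\cap H)(L_v) \ne \emptyset$'' at the real $v$ are Zariski-open and non-empty conditions on $H$ over the relevant completion, so by weak approximation on the space of hyperplanes one can choose $H$ defined over $L$ satisfying all of them simultaneously. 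Iterating down to dimension one and passing to the smooth projective model (the finitely many removed points doing no harm), the problem becomes: given a smooth projective geometrically connected curve $C/L$ with $C(L_v) \ne \emptyset$ at the real places and $\Gal(\overline{L}_v/L_v)$-invariant non-empty open subsets $\Omega_v \subseteq C(L_v^{\nr})$ for $v \in S_1$ and $\Omega_v \subseteq C(\overline{L}_v)$ for $v \in S_2$, produce a totally real $L_1/L$, linearly disjoint from $L^{\avoid}$, unramified at the places of $S_1$, and a closed point of $C$ with residue field $L_1$ lying in every $\Omega_v$.

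This curve case is the hard part, and I would not reprove it: it is Rumely's local--global principle in the refined form of \cite{mb}. The mechanism is to work with effective $L$-rational divisors on $C$ of large degree $d$; Riemann--Roch makes such a divisor move in a positive-dimensional linear system, and one uses this freedom to push it $v$-adically into the prescribed neighbourhoods at the places of $S_1 \cup S_2$ and to make it a sum of real points at the archimedean places, which is possible precisely because $C(\mathbf{R}) \ne \emptyset$ there; a Hilbert-irreducibility argument applied to the linear system forces the divisor to be a single closed point, so that its residue field $L_1$ is a field; and a Chebotarev-type argument secures the linear disjointness from $L^{\avoid}$ and the unramifiedness at the places of $S_1$. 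The reductions in the previous paragraph are formal, so it is this last, arithmetic step --- the existence of the point on the curve --- that carries all the weight.
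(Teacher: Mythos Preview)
Your proposal is correct and, at bottom, takes the same approach as the paper: both treat the result as a black box from the literature. The paper's entire proof is the single sentence ``Our precise statement is a special case of \cite[Prop.~2.1]{HSBT}.'' You instead cite \cite{mb} directly and then sketch the Bertini reduction to curves followed by the Rumely/Moret--Bailly local--global principle; this sketch is accurate and adds useful context the paper omits. One small point worth noting: the precise packaging you need --- totally real $L_1$, linear disjointness from a prescribed $L^{\avoid}$, and the $S_1$/$S_2$ dichotomy with unramifiedness at $S_1$ --- is not stated in Moret--Bailly's original paper but is exactly what \cite[Prop.~2.1]{HSBT} formulates, so the paper's citation is the more efficient reference even though your sketch correctly indicates how those refinements are obtained.
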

\begin{proof}
Our precise statement is a special case of \cite[Prop.~2.1]{HSBT}.
\end{proof}

We also recall a result on potential modularity of elliptic curves which is 
essentially contained in \cite{tay-fm2}:
\begin{prop}\label{ecpotaut}
Suppose that $E/\Q$ is a non-CM elliptic curve, and that $\cL$ is a finite set 
of rational primes at which $E$ has good reduction.
Suppose also that 
$L_1^{\avoid}/\Q$ is a finite extension.

Then we can find \begin{itemize}
	\item a finite Galois extension $L_2^{\avoid}/\Q$ linearly disjoint from 
	$L_1^{\avoid}$ over $\Q$ and
	\item a finite totally real Galois extension $L^{\suffi}/\Q$, unramified 
	above $\cL$ such that $L^{\suffi}$ is linearly disjoint from 
	$L_1^{\avoid}L_2^{\avoid}$ over $\Q$
\end{itemize}

such that for any finite totally real extension $L_2/L^{\suffi}$ which is 
linearly disjoint from $L_2^{\avoid}$ over $\Q$, there is a regular algebraic  
cuspidal automorphic representation $\pi$ of $\GL_2(\A_{L_2})$ of weight $0$ 
such that for every rational prime $l$ and any $\iota: \overline{\Q}_l \cong 
\C$ we have \[r_{\iota}(\pi)\cong r_{E,l}^\vee|_{G_{L_2}}.\] 
Moreover, $\pi$ is 
unramified above any prime where $E$ has good reduction.
\end{prop}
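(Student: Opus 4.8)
The plan is to adapt Taylor's potential modularity argument for elliptic curves, i.e.\ the ``$3$-$5$ switch'' of \cite{tay-fm2}, organised so that the conclusion holds uniformly as $L_2$ ranges over the permitted totally real extensions. Since $E$ is non-CM, I would first reduce to the case that $\overline{r}_{E,5}$ is absolutely irreducible with image containing a conjugate of $\SL_2(\F_5)$; the remaining bounded list of residual-image configurations (reducible, dihedral, or exceptional solvable projective image) is treated by the analogous argument with the roles of $3$ and $5$ interchanged, or by automorphic induction, exactly as in \cite{tay-fm2}.

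Next I would produce the auxiliary elliptic curve. The moduli space of pairs $(A,\phi)$ consisting of an elliptic curve $A$ and a symplectic isomorphism $\phi : A[5] \iso E[5]$ is a twist of the modular curve $X(5) \cong \mathbf{P}^1_\Q$; it carries the rational point $(E,\mathrm{id})$, hence is itself isomorphic to $\mathbf{P}^1_\Q$. By Hilbert irreducibility together with a weak approximation argument — and, where necessary, an application of Proposition~\ref{moretbailly} to pass to a totally real extension — one finds a finite totally real Galois field $L^{\suffi}/\Q$, unramified above $\cL$ and linearly disjoint from $L_1^{\avoid}$, together with an elliptic curve $A$ over $L^{\suffi}$ such that $A[5] \cong E[5]$ as symplectic $G_{L^{\suffi}}$-modules, such that $\overline{r}_{A,3}(G_{L^{\suffi}})$ contains a conjugate of $\SL_2(\F_3)$, and such that $A$ has good reduction above $\cL$. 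I would then take $L_2^{\avoid}/\Q$ to be a finite Galois extension, linearly disjoint from $L_1^{\avoid}$, built out of the fixed fields of $\overline{r}_{E,3}$, $\overline{r}_{E,5}$, $\overline{r}_{A,3}$, $\overline{r}_{A,5}$ and $\Q(\zeta_{15})$, and chosen together with $L^{\suffi}$ — using the linear-disjointness flexibility in Proposition~\ref{moretbailly} — so that $L^{\suffi}$ is linearly disjoint from $L_1^{\avoid}L_2^{\avoid}$ and so that the residual images described below survive restriction to any permitted $L_2$.

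Now fix a finite totally real $L_2/L^{\suffi}$ linearly disjoint from $L_2^{\avoid}$ over $\Q$. By construction $\overline{r}_{A,3}(G_{L_2})$ contains a conjugate of $\SL_2(\F_3)$, $\overline{r}_{E,5}(G_{L_2})$ contains a conjugate of $\SL_2(\F_5)$, and $\zeta_3,\zeta_5 \notin L_2$. Since the projective image of $\overline{r}_{A,3}|_{G_{L_2}}$ is then $A_4$ or $S_4$, hence solvable, Langlands--Tunnell shows that $\overline{r}_{A,3}|_{G_{L_2}}$ is modular; as $A$ is potentially Barsotti--Tate at the primes above $3$ and the image of $\overline{r}_{A,3}|_{G_{L_2}}$ is large enough to feed into a modularity lifting theorem at $\ell=3$ over totally real fields (as in \cite{tay-fm2}), it follows that $r_{A,3}|_{G_{L_2}}$ is modular, hence $A$ is modular over $L_2$, and therefore $\overline{r}_{E,5}|_{G_{L_2}} \cong \overline{r}_{A,5}|_{G_{L_2}}$ is modular. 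Applying a modularity lifting theorem once more, now at $\ell=5$ (using that $E$ is potentially Barsotti--Tate above $5$, that $\overline{r}_{E,5}|_{G_{L_2}}$ is modular, and that its image is large), I obtain a regular algebraic cuspidal automorphic representation $\pi$ of $\GL_2(\A_{L_2})$ of weight $0$ and an isomorphism $\iota : \overline{\Q}_5 \iso \C$ with $r_\iota(\pi) \cong r_{E,5}^\vee|_{G_{L_2}}$, and with $\pi$ unramified at every finite place at which $r_{E,5}^\vee$ is unramified or crystalline, i.e.\ at every place where $E$ has good reduction. Finally, comparing characteristic polynomials of Frobenius over the set of places of good reduction and using compatibility of the systems $\{r_{E,l}^\vee\}_l$ and $\{r_{\pi,\lambda}\}_\lambda$, I upgrade this to $r_\iota(\pi) \cong r_{E,l}^\vee|_{G_{L_2}}$ for every rational prime $l$ and every $\iota : \overline{\Q}_l \iso \C$.

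The main obstacle is the construction in the middle paragraph: one must produce the auxiliary curve $A$ (and, where needed, the totally real field $L^{\suffi}$) with simultaneously a large residual mod-$3$ image, solvable mod-$3$ projective image, and the prescribed reduction behaviour above $\cL$; and one must arrange the tower $L^{\suffi}$, $L_1^{\avoid}$, $L_2^{\avoid}$ so that the linear-disjointness requirements of the statement hold while still guaranteeing that $\overline{r}_{A,3}(G_{L_2})$ and $\overline{r}_{E,5}(G_{L_2})$ stay large and the Langlands--Tunnell input at $3$ stays valid for every permitted $L_2$. It is precisely this uniformity in $L_2$ that makes the proposition usable in the potential automorphy arguments later on. A secondary point is marshalling the correct modularity lifting theorems at $\ell=3$ and $\ell=5$ over totally real fields in the potentially Barsotti--Tate (and possibly ramified-at-$\ell$) setting, together with the disposal at the outset of the exceptional residual-image configurations, both of which are carried out as in \cite{tay-fm2}.
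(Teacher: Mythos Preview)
The paper does not actually prove this proposition; its proof is the single sentence ``Our precise statement is a special case of \cite[Corollary 7.2.4]{10author},'' and the text introducing the proposition already notes that the result is ``essentially contained in \cite{tay-fm2}.'' Your sketch is a plausible reconstruction of the argument behind that citation and correctly identifies Taylor's $3$--$5$ switch as the mechanism.

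One organisational point deserves care. As written, you construct $A$ over $L^{\suffi}$ and then build $L_2^{\avoid}$ out of the splitting fields of $\overline{r}_{A,3}$ and $\overline{r}_{A,5}$, while simultaneously demanding that $L^{\suffi}$ be linearly disjoint from $L_1^{\avoid}L_2^{\avoid}$. If $A$ is genuinely defined only over $L^{\suffi}$, the splitting field of $\overline{r}_{A,3}$ already contains $L^{\suffi}$, so $L_2^{\avoid} \supseteq L^{\suffi}$ and the linear-disjointness forces $L^{\suffi}=\Q$; your ``chosen together with $L^{\suffi}$'' clause cannot rescue this, since Moret--Bailly lets you avoid a field fixed \emph{in advance}, not one depending on the output point. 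The standard way to untangle this is to find $A$ over $\Q$ --- which is possible, since your twist of $X(5)$ is $\mathbf{P}^1_\Q$ with the rational point $(E,\mathrm{id})$, and Hilbert irreducibility with weak approximation already suffices to impose the large-mod-$3$-image and good-reduction-above-$\cL$ conditions without leaving $\Q$ --- then define $L_2^{\avoid}$ from the mod-$3$ and mod-$5$ data of $E$ and $A$ (now all extensions of $\Q$), and only afterwards produce $L^{\suffi}$, linearly disjoint from $L_1^{\avoid}L_2^{\avoid}$ and unramified above $\cL$, as a field over which the necessary modularity lifting hypotheses at $3$ and $5$ hold. With that reordering your outline matches how the argument is organised in \cite[\S7.2]{10author}.
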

\begin{proof}
Our precise statement is a special case of \cite[Corollary 7.2.4]{10author}.
\end{proof}

\subsection{HBAVs}
Let $M$ be a totally real number field and let $S$ be a scheme. For an Abelian 
scheme $A/S$ equipped with a ring embedding $\iota: \cO_M\hookrightarrow 
\End(A/S)$ we denote by $(\cM_A,\cM_A^+)$ the module $\cM_A$ of $\cO_M$-linear, 
symmetric homomorphisms from $A$ to $A^\vee$, with its positive cone $\cM_A^+$ 
of polarizations.  

\begin{defn}
	An $M$-HBAV over $S$ is a pair $(A,\iota)$ as above, such that the natural 
	map $A\otimes_{\cO_M}\cM_A\to A^\vee$ is an isomorphism. 
	
	For a non-zero fractional ideal $\mathfrak{c} \subset M$, a 
	$\mathfrak{c}$-polarization of an $M$-HBAV $A$ is an isomorphism $j: 
	\mathfrak{c} \iso \cM_A$ of $\cO_M$-modules with $j(\mathfrak{c}^+) = 
	\cM_A^+$, where $\mathfrak{c}^+$ denotes the totally positive 
	elements of $\mathfrak{c}$.
\end{defn}
\begin{rem}\label{polnremark}
	When the fractional ideal $\mathfrak{c}$ contains $\cO_M$ the map $x 
	\mapsto x\otimes 1$ induces an isomorphism $A/A[\mathfrak{c}^{-1}] \cong 
	A\otimes_{\cO_M}\mathfrak{c}$. It follows that specifying a 
	$\mathfrak{c}$-polarization $j$ of an $M$-HBAV $A$ is equivalent to 
	specifying 
	a single $\cO_M$-linear polarization $\lambda: A \to A^{\vee}$ with kernel 
	$A[\mathfrak{c}^{-1}]$. The polarization $\lambda$ corresponds to $j(1)$. 
	See \cite[\S2.6]{DP}.
\end{rem}

If $(A,\iota,j)$ is an $M$-HBAV equipped with a $\mathfrak{c}$-polarization and 
$\ga$ is an ideal in $\cO_M$ we obtain perfect $\cO_M/\ga$-bilinear alternating 
pairings (see \cite[\S2.12]{DP}) \begin{equation}\label{pairing}A[\ga] \times 
A[\ga] \to 
(\mathfrak{c}^{-1}\cD^{-1}\otimes \mu_{N\ga})[\ga],\end{equation} 
with $\cD \subseteq \cO_M$ the different of $M/\Q$ and $N\ga = \lvert 
\cO_M/\ga\rvert$ the norm of $\ga$, and isomorphisms  
\[A[\ga]\otimes_{\cO_M}\mathfrak{c} \to A[\ga]^\vee.\] We refer to these 
alternating pairings as the Weil pairing for an HBAV with a fixed 
$\mathfrak{c}$-polarization.

We will also use the following related construction. 
Let $K$ be either $F$ or its completion at some prime. 
Let $\frakl$ be a prime of $M$ of residual characteristic $l$, 
and let $\rbar : G_K \to \GL_2(k_\frakl)$ 
be a continuous representation with $\det\rbar = \overline{\epsilon}_l$. 
Letting $V_{\rbar}$ be the \'etale $k_\frakl$-vector space scheme over $K$ defined by $\rbar$, 
the standard symplectic pairing on $V_{\rbar}$ is an $\cO_M$-bilinear perfect pairing
\begin{equation}\label{symplectic_pairing}
  V_{\rbar} \times V_{\rbar} \to (\cO_M \otimes\mu_l)[\frakl]
\end{equation}
that induces an isomorphism $V_{\rbar} \otimes \cD^{-1} \cong V_{\rbar}^\vee$, 
with $V_{\rbar}^\vee$ the Cartier dual $\Hom(V_{\rbar},\G_m)$ of the group 
scheme $V_{\rbar}$ (which naturally inherits the structure of $k_\frakl$-vector 
space scheme).
If further $\frakl$ is unramified in $M$, then $x \mapsto x \otimes 1$ defines an 
isomorphism $V_{\rbar} \cong V_{\rbar}\otimes \cD^{-1}$ and the symplectic 
pairing defines an canonical isomorphism $V_{\rbar} \cong V_{\rbar}^\vee$. 

If $\frakl$ is a prime of $\cO_M$ of residue characteristic $l$, then a 
\emph{divisible $\cO_{M,\frakl}$-module} over a scheme $S$ will 
mean a $l$-divisible group $\cG/S$ equipped with a ring homomorphism 
$\cO_{M,\frakl} \to \End_S \cG$.

\begin{prop}\label{findAV}
	Let $k$ be algebraically closed of characteristic $l$, let $\frakl$ be a 
	prime of $M$ lying over $l$. Let $(\cG/k, \lambda)$ be a divisible 
	$\cO_{M,\frakl}$-module of height 
	$2[M_{\frakl}:\Ql]$ equipped with an $\cO_{M,\frakl}$-linear symmetric 
	isomorphism (i.e.~a principal quasi-polarization)
	$\lambda: \cG \cong \cG^{\vee}$.  
	
	Let $\mathfrak{c} \supseteq \cO $ be a fractional ideal of $M$ such that $l$ is 
	coprime to $\mathfrak{c}^{-1}$. Then there exists an 
	$M$-HBAV over $k$ equipped with $\mathfrak{c}$-polarization $(A,\iota,j)$ 
	and 
	an 
	isomorphism $i: A[\frakl^{\infty}] \cong \cG$ compatible with the 
	$\cO_{M,\frakl}$ actions on both sides such that $i^\vee\circ\lambda\circ 
	i = j(1)$.
\end{prop}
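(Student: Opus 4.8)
The plan is to deform the given divisible $\cO_{M,\frakl}$-module $(\cG,\lambda)$ to a global $M$-HBAV by a Serre--Tate / Oort-type lifting argument, realizing $\cG$ as the $\frakl$-divisible group of an abelian variety parametrized by a Hilbert modular moduli space, and then arguing that moduli spaces of $M$-HBAVs with $\mathfrak{c}$-polarization structure have points with the prescribed $\frakl$-divisible group. Concretely, write $\cG = \prod_{\frakl' | l} \cG_{\frakl'}$ where we take $\cG_{\frakl'}$ for $\frakl' \ne \frakl$ to be chosen later (e.g. a product of copies of a fixed principally quasi-polarized height-$2[M_{\frakl'}:\Ql]$ divisible module, such as the one coming from an ordinary HBAV), so that $\prod \cG_{\frakl'}$ has height $2[M:\Q]$ and a principal quasi-polarization agreeing with $\lambda$ on the $\frakl$-component. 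The Serre--Tate theorem then identifies the deformation theory of a would-be HBAV $A/k$ with $A[l^\infty] \cong \prod \cG_{\frakl'}$ together with its $\mathfrak{c}$-polarization: such $A$ exist because one can first produce a single polarized (ordinary, say, outside $\frakl$) HBAV $A_0/k$ — for instance by reduction of a CM abelian variety with $\cO_M$-action, which exists since $M$ is totally real of degree $g$ and we may take $A_0$ to be the reduction of a suitable Hilbert--Blumenthal abelian variety — and then deform its $\frakl$-divisible group within the Serre--Tate formal group to match $\cG$, using that the deformation space of a principally quasi-polarized $\frakl$-divisible group over $k$ surjects onto (indeed is) the relevant local deformation ring.

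More carefully, the key steps in order are as follows. First, I would fix an auxiliary $M$-HBAV $(A_0,\iota_0,j_0)$ over $k$ with $\mathfrak{c}$-polarization: this is where the hypothesis $\mathfrak{c} \supseteq \cO_M$ with $l$ coprime to $\mathfrak{c}^{-1}$ is used, via Remark~\ref{polnremark}, to reduce the $\mathfrak{c}$-polarization to an honest $\cO_M$-linear polarization $\lambda_0 : A_0 \to A_0^\vee$ with kernel $A_0[\mathfrak{c}^{-1}]$ — a prime-to-$l$ group scheme, hence étale-locally split and irrelevant to the $\frakl$-divisible group. Such $(A_0,\iota_0)$ exist over $k$ (e.g. as products of elliptic curves with the right CM, or reductions of HBAVs over number fields) and we arrange $A_0$ to be ordinary at every prime above $l$ except possibly $\frakl$. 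Second, invoke Serre--Tate: the category of deformations of $A_0$ to Artinian $k$-algebras, compatibly with the $\cO_M$-action and the polarization $\lambda_0$, is equivalent to the category of deformations of the polarized $\frakl$-divisible group $A_0[\frakl^\infty]$ (the prime-to-$\frakl$ part $\prod_{\frakl'\ne\frakl}A_0[\frakl'^\infty]$ being ordinary, its deformations are constant and we hold it fixed). Third, observe that $(\cG,\lambda)$ is itself an object of this deformation groupoid over $k$ — or rather, produce a family $\cA$ over a complete local $k$-algebra $R$ whose generic fibre is versal and whose special fibre has $\frakl$-divisible group $\cG$; by effectivity of formal deformations of abelian varieties (Grothendieck's algebraization, since the polarization gives a projective family), this family algebraizes, and some closed point of $\Spec R(k)$ gives an abelian variety $A/k$ with $A[\frakl^\infty] \cong \cG$ compatibly with $\cO_{M,\frakl}$-action and with $\lambda$ inducing $i^\vee\circ\lambda\circ i = j(1)$. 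Fourth, equip $A$ with the $\cO_M$-linear structure (carried along by Serre--Tate) and recover the $\mathfrak{c}$-polarization from $\lambda_0$ on the prime-to-$\frakl$ part plus $\lambda$ on the $\frakl$-part, checking $j(1)$ lands in the positive cone $\cM_A^+$ since positivity is detected away from $\frakl$, where it matches $j_0$.

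The main obstacle I expect is matching the polarization data precisely — producing the isomorphism $i : A[\frakl^\infty] \cong \cG$ so that it is simultaneously $\cO_{M,\frakl}$-linear \emph{and} intertwines $\lambda$ on $\cG$ with the polarization $j(1)$ pulled back from $A$, i.e. the identity $i^\vee \circ \lambda \circ i = j(1)$. This is really the statement that the deformation functor of the \emph{polarized} $\frakl$-divisible group is what Serre--Tate controls, so one must check that the polarization $\lambda_0$ on $A_0$ and the quasi-polarization $\lambda$ on $\cG$ can be made to agree under deformation; this is standard (the symmetric homomorphisms deform uniquely given a deformation of the $\frakl$-divisible group, by rigidity), but writing it down carefully requires keeping track of the Cartier duality and the pairing \eqref{pairing} — equivalently the symplectic pairing \eqref{symplectic_pairing} — and verifying compatibility with the $\cD^{-1}$-twist built into the definition of an $M$-HBAV. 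A secondary technical point is ensuring the auxiliary $A_0$ genuinely exists over an algebraically closed field of characteristic $l$ with the prescribed ordinarity outside $\frakl$ and a $\mathfrak{c}$-polarization; this follows from the nonemptiness of the relevant Hilbert modular variety over $\overline{\F}_l$ (reduce a characteristic-zero HBAV with real multiplication by $\cO_M$ and the right polarization type, which exists for every $\mathfrak{c}$), so it is not a serious difficulty, but it is the input that makes the deformation argument get off the ground.
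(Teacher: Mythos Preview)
Your proposal has a genuine gap: you are trying to use Serre--Tate deformation theory to \emph{change} the $\frakl$-divisible group of an abelian variety over $k$, but that is not what Serre--Tate does. Serre--Tate identifies deformations of $A_0$ to Artinian thickenings $R \twoheadrightarrow k$ with deformations of $A_0[l^\infty]$ to $R$; over the closed point $\Spec k$ itself the $\frakl$-divisible group is still $A_0[\frakl^\infty]$, not an arbitrary $\cG$. In your step~3 you write ``produce a family $\cA$ over a complete local $k$-algebra $R$ \ldots\ whose special fibre has $\frakl$-divisible group $\cG$'' and then take a closed $k$-point --- but the only closed $k$-point of $\Spec R$ is the residue field, and the fibre there is whatever you started from. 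So either you already have an HBAV with $A_0[\frakl^\infty]\cong\cG$ (which is the statement to be proved), or you do not, and Serre--Tate cannot manufacture one. Your choice of an \emph{ordinary} $A_0$ makes this especially clear: if $\cG$ happens to be supersingular, no deformation of an ordinary $A_0$ over any $k$-algebra has supersingular special fibre.

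The paper's argument is not a deformation argument at all; it is an \emph{isogeny} argument within a fixed Newton stratum. The substantive input, which your sketch is missing, is a nonemptiness-of-Newton-strata result for Hilbert modular varieties (Yu \cite{yuhilbert}, or Goren--Oort in the unramified case): there exists some $\mathfrak{c}$-polarized $M$-HBAV $(A_0,\iota_0,j_0)$ over $k$ whose $\frakl$-divisible group has the same isocrystal as $\cG$. A further lemma of Yu upgrades this to an isomorphism of quasi-polarized isocrystals with $\cO_{M,\frakl}$-action. Once $A_0[\frakl^\infty]$ and $\cG$ are isogenous as polarized Dieudonn\'e modules, one writes down an $\cO_M$-linear isogeny $\pi:A_0\to A$ (with $l$-power kernel) realizing that isogeny on $\frakl$-divisible groups, and checks that the induced symmetric map $\lambda_A:A\to A^\vee$ has degree prime to $l$ and kernel $A[\mathfrak{c}^{-1}]$, hence comes from a $\mathfrak{c}$-polarization $j$ with $j(1)=\lambda_A$ matching $\lambda$ under the identification $i:A[\frakl^\infty]\cong\cG$. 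The deformation theory you invoke plays no role here; the two key ingredients are (i) existence of an HBAV in the correct isogeny class, and (ii) modifying it by an isogeny to hit $\cG$ on the nose.
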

\begin{proof}The statement is essentially \cite[Thm.~7.4]{yuhilbert}, we just need to 
    take a little care to ensure that our $M$-HBAV has a 
    $\mathfrak{c}$-polarization. By 
    \cite[Thm.~7.3(1)]{yuhilbert} (see also \cite[Cor.~5.4.12]{gorenoort} 
    and
\cite{goren} for the case with $l$ unramified in $M$, which will suffice 
    for our applications) there is an $M$-HBAV with 
    $\mathfrak{c}'$-polarization (for some fractional ideal $\mathfrak{c}'$ of 
    $M$)
    $(A_0,\iota_0,j_0)$ such that the isocrystals of $A_0[\frakl^\infty]$ and 
    $\cG$ (ignoring the polarization and the $\cO_{M,\frakl}$-action) are 
    isomorphic.  We let $\mathfrak{b}$ be an (integral) ideal of $\cO_M$ with 
    $[\mathfrak{b}\mathfrak{c}'] = [\mathfrak{c}]$ in the narrow 
    class group of $M$. Replacing $A_0$ with $A_0/A_0[\mathfrak{b}]$ we obtain 
    a $\mathfrak{c}$-polarized $M$-HBAV $A_0$ such that the isocrystals of 
    $A_0[\frakl^\infty]$ and $\cG$ are isomorphic. 
    
    It follows from 
    \cite[Cor.~3.7]{yuhilbert} that the 
    quasi-polarized isocrystals with $\cO_{M,\frakl}$-action arising from the 
    two quasi-polarised divisible $\cO_{M,\frakl}$-modules 
    $(A_0[\frakl^\infty],\iota_0,j_0(1))$, $(\cG,\lambda)$ are isomorphic. We 
    can also fix choices of principally quasi-polarized 
    divisible $\cO_{M,\frakl'}$-modules for places $\frakl \ne \frakl'|l$ and 
    demand that $A_0[\frakl'^\infty]$ has quasi-polarized isocrystal isomorphic 
    to these. 
    
    By 
    Dieudonn\'{e} theory, we have an $\cO_M$-linear isogeny 
    $A_0 \overset{\pi}{\to} A$ with kernel contained in $A_0[l^n]$ for some 
    $n$ and a symmetric $\cO_M$-linear isogeny $\lambda_{A}: A \to 
    A^\vee$ with degree prime to $l$ (since we ensure it induces our principal 
    quasi-polarizations on our fixed divisible $\cO_{M,\frakl'}$-modules for 
    all places $\frakl'|l$) such that $\pi^\vee\circ\lambda_{A}\circ\pi = 
    l^{2n}j_0(1)$, together with an isomorphism $i: A[\frakl^{\infty}] 
    \to \cG$ such that $i^\vee\circ\lambda\circ i = \lambda_A$. Since 
    $\lambda_A$ has degree prime to $l$ and $\pi$ has $l$-power degree, it 
    follows from the equation $\pi^\vee\circ\lambda_{A}\circ\pi = 
    l^{2n}j_0(1)$ that $\lambda_A$ is a polarization with kernel 
    $A[\mathfrak{c}^{-1}]$. Applying Remark \ref{polnremark}, we obtain the 
    desired $M$-HBAV $A$ equipped with a $\mathfrak{c}$-polarization.    
\end{proof}
\begin{rem}\label{trivialpolarizationremark}
	If we have $\mathfrak{c}$ and $\cG$ as in the above Proposition, the map $x 
	\mapsto x\otimes 1$ induces an isomorphism $\cG \cong 
	\cG\otimes_{\cO_{M,\frakl}} 
	\mathfrak{c}_{\frakl}$ (it induces an isomorphism from $\cG/\cG[\mathfrak{c}_{\frakl}^{-1}]$ 
	to $\cG\otimes_{\cO_{M,\frakl}}\mathfrak{c}_{\frakl}$, and  
	$\mathfrak{c}^{-1}$ 
is coprime 
	to $l$). The quasi-polarization 
	$\lambda: \cG\cong \cG^\vee$ 
	therefore corresponds to an isomorphism $j_{\cG}:\cG\otimes_{\cO_{M, \frakl}}\mathfrak{c}_{\frakl} 
	\cong \cG^\vee$. The condition that $j(1) = i^\vee\circ \lambda\circ i$ 
	implies 
	that, under the isomorphism $i: A[\frakl^\infty] \cong \cG$, $j_{\cG}$  is 
	induced by the $\mathfrak{c}$-polarization $j$ on $A$.
\end{rem}

\begin{lemma}\label{FLlemma}
Let $l$ be an odd prime and let $v$ and $\frakl$ be primes of $F$ and $M$, respectively, 
unramified over $l$. 
Let $\rbar : G_{F_v} \to \GL_2(k_\frakl)$ be a continuous representation such that:
\begin{itemize}
 \item $\det\rbar = \overline{\epsilon}_l$,
 \item there is a crystalline lift $r : G_{F_v} \to \GL_2(\cO)$ (for a finite 
 extension $\cO/\cO_{M,\frakl}$) with labelled Hodge--Tate weights all equal to $\{-1,0\}$.
\end{itemize}
Let $V_{\rbar}$ be the $k_\frakl$-vector space scheme over $F_v$ underlying $\rbar$.
Then we can find a divisible $\cO_{M,\frakl}$-module $\cG$ defined over $\cO_{F_v}$ of height $2[\cO_{M,\frakl}:\Z_l]$ 
equipped with a $\cO_{M,\frakl}$-linear symmetric isomorphism $\lambda : \cG \cong \cG^\vee$, 
and an isomorphism $i : V_{\rbar} \cong \cG[\frakl]_{F_v}$ such that 
$i^\vee \circ \lambda[\frakl]_{F_v}\circ i$ is the isomorphism $V_{\rbar} \cong V_{\rbar}^\vee$ 
induced by the standard symplectic pairing on $V_{\rbar}$.
\end{lemma}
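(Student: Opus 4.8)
The plan is to reduce the statement to an application of Fontaine--Laffaille theory, realizing $\rbar$ inside the $\frakl$-torsion of a divisible $\cO_{M,\frakl}$-module over $\cO_{F_v}$. First I would note that since $l$ is odd and unramified in $F$, and the crystalline lift $r$ has Hodge--Tate weights in $\{-1,0\}$, the range of Hodge--Tate weights has length $1 < l-1$, so $r$ lies in the Fontaine--Laffaille range; hence the $G_{F_v}$-stable $\cO$-lattice $r$ corresponds under the Fontaine--Laffaille functor to a ($\cO_{F_v}\otimes_{\Z_l}\cO$)-module with filtration and Frobenius, and by Raynaud/Fontaine's classification (or directly via the crystalline Dieudonn\'e functor of de~Jong/Breuil, valid since the HT weights lie in $[-1,0]$) this data comes from a divisible $\cO_{M,\frakl}$-module $\cG_0$ over $\cO_{F_v}$ with $\cG_0[\frakl]_{F_v} \cong V_{\rbar}$ after inverting $l$ — i.e.\ $\cG_0$ is the $l$-divisible group whose Tate module realizes $r$, with its $\cO_{M,\frakl}$-action. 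The height is $2[\cO_{M,\frakl}:\Z_l]$ because $r$ has rank $2$ over $\cO$.

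Next I would produce the quasi-polarization. The determinant condition $\det\rbar = \overline{\epsilon}_l$ means $\wedge^2 V_{\rbar} \cong \mu_l$ (as $\cO_M/\frakl$-vector space schemes, after accounting for the $\cO_M$-structure), which gives the standard symplectic pairing $V_{\rbar}\times V_{\rbar}\to (\cO_M\otimes\mu_l)[\frakl]$ of \eqref{symplectic_pairing}, inducing $V_{\rbar}\cong V_{\rbar}^\vee$ (using $\frakl$ unramified to trivialize $\cD^{-1}$). On the divisible module side, I would want to lift this: since $\cG_0$ has height $2[\cO_{M,\frakl}:\Z_l]$ with a rank-one $\cO_{M,\frakl}$-linear $\wedge^2$, the pairing on $V_{\rbar}$ extends — possibly after passing to the integral crystalline realization and using that the Fontaine--Laffaille functor is fully faithful and compatible with duals and tensor products — to an $\cO_{M,\frakl}$-linear symmetric isomorphism $\lambda: \cG \cong \cG^\vee$ on a (possibly modified, but one can just take $\cG = \cG_0$ here since the determinant is already cyclotomic so $\cG_0^\vee$ is $\cG_0$ twisted appropriately) divisible module, with $i^\vee\circ\lambda[\frakl]_{F_v}\circ i$ equal to the given symplectic isomorphism. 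Concretely: the crystalline lift $r$ with $\det r = \epsilon_l\cdot(\text{finite order unramified})$ — but actually $\det\rbar$ is exactly $\overline{\epsilon}_l$, so one arranges $\det r = \epsilon_l$ after twisting $r$ by an unramified character lifting the trivial one, which does not change the residual picture — and then $r \cong r^\vee\otimes\epsilon_l$ furnishes the symplectic self-duality on the Tate module, hence the quasi-polarization on $\cG$, and full faithfulness of Fontaine--Laffaille guarantees it reduces to the standard pairing on $V_{\rbar}$.

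The main obstacle I anticipate is the bookkeeping of the $\cO_M$-linear structures and the different $\cD$: one must carry the $\cO_{M,\frakl}$-action through the Fontaine--Laffaille/Dieudonn\'e equivalence and match the target of the Weil-type pairing $(\cO_M\otimes\mu_l)[\frakl]$ with the Cartier dual, which is exactly where the hypothesis ``$\frakl$ unramified over $l$'' (so $\cD$ is a unit at $\frakl$, and $V_{\rbar}\cong V_{\rbar}\otimes\cD^{-1}$) is used, as in the discussion preceding \eqref{symplectic_pairing}. A secondary subtlety is ensuring the symmetry (as opposed to alternating/antisymmetric) of $\lambda$; since we are on a rank-two situation with $\wedge^2$ giving the pairing, the induced pairing is alternating, and ``symmetric isomorphism $\cG\cong\cG^\vee$'' in the sense used for quasi-polarizations of HBAVs matches this — one should just check the sign conventions agree with those in \eqref{symplectic_pairing} and in Proposition~\ref{findAV}. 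Once $\cG$ and $\lambda$ are constructed over $\cO_{F_v}$ with the compatibility $i^\vee\circ\lambda[\frakl]_{F_v}\circ i = (\text{standard pairing})$, the lemma follows. Everything else — flatness, the height computation, generic fiber identification — is routine.
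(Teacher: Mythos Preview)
Your overall strategy matches the paper's: use Fontaine--Laffaille theory to produce the divisible group, twist to force the determinant to be exactly $\epsilon_l$, and then read off the principal quasi-polarization from the resulting self-duality of the Tate module. There is, however, one genuine gap.

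You build $\cG_0$ directly from the \emph{given} crystalline lift $r$, which has coefficients in some finite extension $\cO/\cO_{M,\frakl}$. The $l$-divisible group attached to $r$ (viewed as a $\Z_l$-representation) then has height $2[\cO:\Z_l]$, and since $\frakl$ is unramified its $\frakl$-torsion coincides with its $l$-torsion, which has $k_\frakl$-rank $2[\cO:\cO_{M,\frakl}]$, not $2$. So unless $\cO=\cO_{M,\frakl}$, your $\cG_0[\frakl]_{F_v}$ is not $V_{\rbar}$ and the height is wrong; the sentence ``the height is $2[\cO_{M,\frakl}:\Z_l]$ because $r$ has rank $2$ over $\cO$'' is exactly where this slips. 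The subsequent twist by an unramified character still takes values in $\cO^\times$, so it does not repair the coefficient problem.

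The paper fixes this by not using the given $r$ at all beyond knowing that $\rbar$ lies in the essential image of the Fontaine--Laffaille functor. It then invokes \cite[Lemma~2.4.1]{cht}: the Fontaine--Laffaille deformation problem is unobstructed, so the FL module of $\rbar$ (defined over $\cO_{F_v}\otimes k_\frakl$) lifts through all Artinian thickenings, producing a crystalline lift $r':G_{F_v}\to\GL_2(\cO_{M,\frakl})$ with coefficients in $\cO_{M,\frakl}$ itself. Only then does one twist by an unramified $\psi\in 1+l\cO_{M,\frakl}$ to get $\det r''=\epsilon_l$, build $\cG$ from the system $\{M_n\otimes N_n\}$ via \cite[\S9.11, Proposition~9.12]{fl}, and finally use Tate's theorem to transport the symplectic self-duality of $T_l(\cG)$ (coming from $\det r''=\epsilon_l$ and the trace pairing on $\cO_{M,\frakl}$) to the desired $\lambda:\cG\cong\cG^\vee$. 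Your use of FL full faithfulness in place of Tate's theorem at the last step is a legitimate alternative, but the missing descent of coefficients to $\cO_{M,\frakl}$ is essential and needs to be supplied.
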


\begin{proof}
This follows from Fontaine--Laffaille theory \cite{fl}. 
First, since $l$ is unramified in $F_v$, the crystalline lift assumption 
implies that $\rbar$ is in the image of the Fontaine--Laffaille functor: 
using the notation of \textit{loc. cit.}, 
there is a $k_\frakl$-object $\overline{M}$ of $\mathrm{MF}_{tor}^{f,2}$ 
such that the action of $G_{F_v}$ on $U_S(\overline{M})$ 
is isomorphic to $\rbar$. 
By \cite[Lemma~2.4.1]{cht}, we can find a lift $r' : G_{F_v} \to \GL_2(\cO_{M,\frakl})$ 
of $\rbar$ such that for each $n\ge 1$, 
there is an $\cO_{M,\frakl}$-object $M_n\in \mathrm{MF}_{tor}^{f,2}$ such that the action 
of $G_{F_v}$ on $U_S(M_n)$ is isomorphic to $r' \bmod l^n$. 
(\textit{Loc. cit.} uses a covariant version of the functor $U_S$, 
but the proof shows that the Fontaine--Laffaille modules can be 
deformed through Artinian thickenings, so carries over unchanged.) 
Then $r$ is crystalline with all labelled Hodge--Tate weights 
equal to $\{-1,0\}$, so $\det r'|_{I_K} = \epsilon_l$. 
Since $\det\rbar = \epsilon_l$ and $l>2$, we can find an unramified 
character $\psi : G_{F_v} \to 1+l\cO_{M,\frakl}$ such that 
$\psi^2 = (\det r')\epsilon_l^{-1}$ and $r'':= r'\otimes\psi$ 
is a lift of $\rbar$ with determinant $\epsilon_l$. 
Moreover, there are $\cO_{M,\frakl}$-objects $N_n$ of $\mathrm{MF}_{tor}^{f,2}$ 
corresponding to $\psi \bmod l^n$ for each $n\ge 1$, 
and the action of $G_{F_v}$ on $U_S(M_n \otimes N_n)$ is given by $r'\otimes \psi \bmod l^n$. 
Applying \cite[\S9.11 and Proposition~9.12]{fl} to the collection $\{M_n \otimes N_n\}_{n \ge 1}$, 
we obtain a divisible $\cO_{M,\frakl}$-module $\cG$ defined over $\cO_{F_v}$ 
such that the $G_{F_v}$-action on the Tate module 
$T_l(\cG)$ is isomorphic to $r''$. 
In particular, we have an isomorphism $i : V_{\rbar} \cong \cG[\frakl]_{F_v} = \cG[l]_{F_v}$ 
of $k_\frakl$-vector space schemes over $F_v$.

It remains to produce $\lambda$.
Since $\det r'' = \epsilon_l$, letting $T = \cO_{M,\frakl}^2$ with $G_{F_v}$-action by $r''$, 
the standard symplectic pairing on $T$ composed with the 
trace pairing $\cO_{M,\frakl} \otimes \cO_{M,\frakl} \to \Z_l$ gives an isomorphism 
$T \cong \Hom_{\Z_l}(T,\Z_l(1))$. 
This implies $T_l(\cG) \cong T_l(\cG^\vee)$ compatibly with the 
$\cO_{M,\frakl}$-module structure.
By a theorem of Tate \cite[Theorem~4]{Tatepdiv}, 
we obtain a $\cO_{M,\frakl}$-linear symmetric isomorphism $\lambda : \cG \cong \cG^\vee$ 
such that $i^\vee \circ \lambda[\frakl]_{F_v}\circ i$ is the isomorphism $V_{\rbar} \cong V_{\rbar}^\vee$ 
induced by the standard symplectic pairing on $V_{\rbar}$.
\end{proof}

\begin{thm}\label{potaut} 
	Suppose $F$ is a CM field, $l$ is an odd prime which is unramified in $F$ 
	and 
	we 
	have a continuous absolutely
	irreducible representation 
	\[\rhobar: G_F \to \GL_2(k) \] with $k/\Fl$ finite such that:
	\begin{itemize}
	\item $\det \rhobar = \barepsilon_l^{-1}$
	\item For all $v|l$, $\rhobar|_{G_{F_v}}$ has a crystalline lift $\rho_v: 
	G_{F_v} \to 
	\GL_2(\cO)$ (for a finite extension $\cO/W(k)$) 
	with labelled Hodge--Tate weights all equal to 
	$\{0,1\}$
\end{itemize}
	
	Suppose moreover that $F^\avoid/F$ is a finite extension. Then we 
	can find a finite CM extension $F_1/F$, linearly disjoint from 
	$F^\avoid$ over $F$ and with $l$ unramified in $F_1$, a regular algebraic 
	cuspidal automorphic 
	representation $\pi$ for $\GL_2(\A_{F_1})$ unramified at places above $l$ 
	and of weight $0$, together with an 
	isomorphism $\iota: \Qlbar \iso \C$ such that (composing $\rhobar$ with 
	some embedding $k \hookrightarrow \Flbar$) \[\rbar_{\iota}(\pi) \cong 
	\rhobar|_{G_{F_1}}.\]
	
	If $\overline{v}_0\nmid l$ is a finite place of $F^+$,
then we can moreover find $F_1$ and $\pi$ as above with $\pi$ 
	unramified above $\overline{v}_0$.	
\end{thm}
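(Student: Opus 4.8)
The plan is to realize $\rhobar|_{G_{F_1}}$ inside the mod-$l$ torsion of an HBAV obtained from a potentially modular elliptic curve, using a Moret--Bailly argument to arrange local conditions. First I would set up the moduli problem. Fix, for each $v|l$ in $F$, the Fontaine--Laffaille divisible $\cO_{M,\frakl}$-module $\cG_v$ with principal quasi-polarization produced by Lemma~\ref{FLlemma} applied to $\rhobar|_{G_{F_v}}$ (after twisting, we may take $M = \Q$ and $\frakl = l$, so these are genuine $p$-divisible groups with polarization and Weil-pairing matching the standard symplectic pairing on $V_{\rhobar}$). By Proposition~\ref{findAV} over $\Fbar_l$ there is an abelian surface with $\cO$-action, i.e.\ a point of a Hilbert--Blumenthal-type moduli space $X$ parametrizing $M$-HBAVs $(A,\iota,j)$ with $\mathfrak{c}$-polarization together with an isomorphism $A[l] \cong V_{\rhobar}$ respecting the symplectic structures. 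This $X$ is a smooth variety over $F$ (a twist of a Hilbert modular variety with level structure), geometrically connected after a suitable choice of $\mathfrak{c}$ and additional rigidifying level structure, and it has $\overline{F}_w$-points for $w|l$ whose associated $p$-divisible group is $\cG_w$ — the Fontaine--Laffaille / Serre--Tate deformation space is smooth, so these spread out to a nonempty open $\Gal(\overline{F}_w/F_w)$-invariant subset $\Omega_w \subset X(\overline{F}_w)$.

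Next I would apply Moret--Bailly (Proposition~\ref{moretbailly}), but one must first pass to a totally real subfield. Take $F = F^+ F_0$; the moduli space $X$ is naturally defined over $F^+$ (the symplectic pairing and $\cO_M$-action descend), so run Proposition~\ref{moretbailly} over $L = F^+$ with $S_2$ the places above $l$ (imposing membership in the $\Omega_w$) and with $L^{\avoid}$ chosen to contain $F^{\avoid}$, the Galois closure of $F/\Q$, and whatever field is needed so that $F_1 := L_1 F_0$ is CM, linearly disjoint from $F^{\avoid}$ over $F$, and has $l$ unramified. The outcome is a totally real $L_1/F^+$ and a point $P \in X(L_1)$; it gives an $M$-HBAV $A/F_1$ with $A[l]|_{G_{F_1}} \cong \rhobar|_{G_{F_1}} \oplus \rhobar|_{G_{F_1}}$ (the HBAV has a $2$-dimensional $\cO_M\otimes\Fbar_l$-representation, which as a $G_{F_1}$-representation is several copies of $\rhobar$) whose $p$-divisible group at each place above $l$ is (Fontaine--Laffaille equivalent to) a power of $\cG_w$, hence crystalline with Hodge--Tate weights $\{0,1\}$, and which is unramified away from $l$ and the ramification of $\rhobar$; moreover if an auxiliary finite place $\overline{v}_0 \nmid l$ of $F^+$ is specified, one throws it into $S_1$ with $\Omega_{\overline{v}_0} = X(L_v^{\nr})$ to force $L_1/F^+$ unramified there, so $A$, and hence $\pi$ below, is unramified above $\overline{v}_0$.

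Then I would make $A$ (or rather the associated $M$-HBAV / its factors) potentially modular. One route: the HBAV $A$ gives, via each embedding $M \hookrightarrow \Fbar_l$, a compatible family of $2$-dimensional pieces, but to invoke an off-the-shelf potential modularity input it is cleanest to reduce $M$ to $\Q$ at the outset (twisting $\rhobar$ so that $\det\rhobar = \barepsilon_l^{-1}$ with trivial coefficient field), so that $A$ is an abelian surface with real multiplication by $\Q\times\Q$, i.e.\ up to isogeny a product $E_1 \times E_2$ of elliptic curves over $F_1$ with $E_1[l] \cong \rhobar|_{G_{F_1}}$. Apply Proposition~\ref{ecpotaut} (the Taylor potential modularity input, applied after base-changing a suitable auxiliary non-CM elliptic curve $E/\Q$ matching $\rhobar$ on its $l$-torsion, in the usual ``switching'' setup) to produce, after a further totally real base change $L_2/L^{\suffi}$ linearly disjoint from all the accumulated \texttt{avoid} fields, a regular algebraic weight $0$ cuspidal $\pi'$ for $\GL_2$ over $L_2 F_0$ with $r_\iota(\pi') \cong r_{E_1,l}^\vee = $ (the appropriate twist of) $\rhobar$, unramified where $E_1$ has good reduction and in particular above $l$ and above $\overline{v}_0$. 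Relabel $F_1 := L_2 F_0$ and $\pi := \pi'$. The main obstacle, and where I would spend the most care, is the coordination of all the linear-disjointness (\texttt{avoid}) conditions and the descent of the moduli space to the totally real field: one must choose $L^{\avoid}$ in Moret--Bailly large enough that the later elliptic-curve potential modularity step (which introduces its own $L_1^{\avoid}, L_2^{\avoid}$) can be carried out while keeping $F_1/F$ linearly disjoint from the given $F^{\avoid}$ and CM, and one must check that $X$ is genuinely geometrically connected over $F^+$ after adding enough level structure so that the moduli interpretation is rigid — this is the kind of bookkeeping that is routine in the potential automorphy literature but is the real content here, whereas the automorphy lifting input (Theorem~\ref{thm:main_automorphy_lifting_theorem}) is not needed for this statement, only for its application.
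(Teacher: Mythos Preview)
Your proposal has a structural gap in the switching argument, and a related error about the coefficient field.

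First, you cannot ``twist'' $\rhobar$ to have coefficients in $\F_l$. If $k \ne \F_l$ (the generic case) then $\rhobar$ simply does not arise as $E[l]$ for any elliptic curve. This is exactly why the paper works with $M$-HBAVs for a totally real field $M$ chosen so that $k_\frakl \cong k$: the $\cO_M$-action makes $A[\frakl]$ a rank-$2$ $k_\frakl$-vector space scheme. With $M = \Q$ an HBAV is just an elliptic curve, whose $l$-torsion is an $\F_l$-scheme, so your reduction to $M=\Q$ and to $A \sim E_1 \times E_2$ is not available.

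Second, and more seriously, your moduli space imposes a level structure only at $l$, and your claim that Theorem~\ref{thm:main_automorphy_lifting_theorem} is not needed here is false. After Moret--Bailly produces your HBAV $A/F_1$ with $A[\frakl] \cong \rhobar$, this $A$ has no link to any known automorphic object, so there is nothing to bootstrap from. Proposition~\ref{ecpotaut} concerns elliptic curves over $\Q$, not elliptic curves over a CM field arising from a Moret--Bailly point; and there is no auxiliary $E/\Q$ with $E[l]$ matching $\rhobar$, since $\rhobar$ is a $G_F$-representation valued in $k$.

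The paper's argument fixes a non-CM $E/\Q$ and an auxiliary prime $p \ne l$ (with $\SL_2(\F_p) \subset \rbar_{E,p}(G_F)$) \emph{before} running Moret--Bailly, and the moduli variety $Y$ carries level structures at \emph{both} $\frakl$ (matching $\rhobar^\vee$) and $\gp$ (matching $E[p]$). Proposition~\ref{ecpotaut} is invoked for this fixed $E/\Q$ to produce $L^{\suffi}$ and $L_2^{\avoid}$, which are fed into $L^{\avoid}$ for Proposition~\ref{moretbailly}. Once Moret--Bailly yields $A$ over $F_1 = F_0^+ L^{\suffi} F$, one has $\rbar_{A,\gp} \cong \rbar_{E,p}^\vee|_{G_{F_1}}$ residually automorphic, and Theorem~\ref{thm:main_automorphy_lifting_theorem} is then applied \emph{at the prime $p$} to conclude $r_{A,\gp}^\vee$ is automorphic. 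Only after that does one switch to the $\frakl$-adic side to get $\rbar_{A,\frakl}^\vee \cong \rhobar|_{G_{F_1}}$ residually automorphic. The automorphy lifting theorem is the bridge that makes the prime-switch work; without it the argument does not close.
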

 \begin{proof}

We begin by choosing a totally real number field $M$ together with a prime 
$\frakl | l$ of $M$ such that $l$ is unramified in $M$ and $k_\frakl$ is 
isomorphic to $k$. We fix an isomorphism $k \cong k_\frakl$ and regard 
$\rhobar$ as a representation with coefficients in $k_\frakl$.

Choose a non-CM elliptic curve $E/\Q$ with good reduction at $l$ and the 
rational prime $q$ under $\overline{v}_0$. Choose a rational prime $p \ne l$ 
such that
\begin{itemize}
\item $p > 5$ splits completely in $FM$,
\item $\SL_2(\Fp) \subset \rbar_{E,p}(G_F)$, $E$ 
has good reduction at $p$ and $\rhobar$ is unramified at places dividing $p$.
\end{itemize}
The second condition is satisfied by all but finitely many primes and the first 
condition is satisfied by a positive density set of primes, so we can find such 
a $p$. We fix a prime $\gp | p$ of $M$.

Let $V_{\rhobar}^\vee$ denote the $k_{\frakl}$-vector space scheme over $F$ 
underlying the dual representation $\rhobar^\vee$ and 
fix the standard symplectic pairing on it, which $\rhobar^\vee$ will preserve 
up to 
multiplier $\barepsilon_l$. We also have the $k_{\gp} \cong \F_p$-vector 
space scheme $(E\otimes_{\Z} \cO_M)[\gp] \cong E[p]$
over $F$, which comes equipped with the Weil pairing.

Denoting the inverse different of $M$ by $\cD^{-1}$, we let $Y$ be the scheme 
over $F$ classifying tuples $(A,j,\alpha_{\rhobar}, 
\alpha_E)$ where:

\begin{itemize}
	\item $A$ is an $M$-HBAV with $\cD^{-1}$-polarization $j$
	\item $\alpha_{\rhobar} : A[\frakl] \to V_{\rhobar}^\vee$ and $\alpha_E: 
	A[\gp] 
	\to E[p]$ are isomorphisms of vector space schemes compatible with our 
	fixed symplectic pairings on the right hand sides and with the pairings 
	(see (\ref{pairing}))
	$A[\frakl] \times A[\frakl] \to (\cO_M/\frakl)(1)$ and $A[\gp] \times 
	A[\gp] 
	\to (\cO_M/\gp)(1)$ on 
	the left hand sides.
\end{itemize}

As in \cite{tay-fm2}, $Y/F$ is a smooth, geometrically connected variety. 
We let $X$ be the restriction of scalars $X = \Res_{F/F^+}Y$, which is also 
smooth and geometrically connected.

Now we apply Proposition \ref{ecpotaut} with $\cL = \{l,p\}$ and $L_1^{\avoid}$ 
the normal closure of $F^{\avoid}\overline{F}^{\ker(\rhobar \times 
\rbar_{E,p})}$ over $\Q$. We obtain a finite Galois extension $L_2^{\avoid}/\Q$ 
linearly disjoint from $L_1^{\avoid}$ over $\Q$ and a finite totally real Galois 
extension $L^{\suffi}/\Q$ which is unramified above $p$ and $l$ and linearly 
disjoint from $L_1^{\avoid}L_2^{\avoid}$ over $\Q$. 

We are going to apply Proposition \ref{moretbailly} to 
$X$ with the following input data:

\begin{itemize}
	\item $L = F^+$, $S_1 = \{\bar{v}|lp\}$, $S_2 = \{\bar{v}_0\}$, $L^{\avoid} 
	= L_1^{\avoid}L_2^{\avoid}L^{\suffi}$
	\item for $\bar{v}|lp$, $\Omega_{\bar{v}} \subset X((F^+_{\bar{v}})^{\nr}) 
	= 
	\prod_{v|\bar{v}}Y(F_v^{\nr})$ is the subset given by Abelian varieties $A$ 
	with good reduction at $v$
	\item $\Omega_{\bar{v}_0} \subset  X(\overline{F^+_{\bar{v}_0}}) = 
	\prod_{v_0|\bar{v}_0}Y(\overline{F}_{v_0})$ is the subset given by Abelian 
	varieties $A$ 
	with good reduction at $v_0$.
\end{itemize}

We need to check that the various hypotheses of Proposition \ref{moretbailly} 
are satisfied. It is clear that $X(F^+_{\bar{v}}) = Y(F_v)$
is non-empty for the real places $\bar{v}$ of $F^+$ ($v$ denotes the unique 
complex place of $F$ extending $\bar{v}$).

For $v$ a place of $F$ 
dividing $p$, we can find a positive integer $f$ such that 
$\rhobar(\Frob_v)^{-f}$ and $\rbar_{E,l}(\Frob_v)^f$ are trivial. We can then 
take $A$ to be the base change of $E\otimes_{\Z}\cO_M$ to the unramified degree 
$f$ extension of $F_v$, $j$ to be induced by the Weil pairing on $E$, 
$\alpha_E$ to be the canonical identification (recall that $p$ splits 
completely in $M$) and $\alpha_{\rhobar}$ to be an isomorphism compatible with 
the Weil pairing on $A[\frakl]$ and our fixed pairing on $V_{\rhobar}^\vee$. 
This shows that for $\bar{v}|p$, $\Omega_{\bar{v}}$ is non-empty. A similar 
argument applies to $\Omega_{\bar{v}_0}$; we can work over an extension which 
trivialises $\rhobar|_{G_{F_{v_0}}}$ for $v_0|\bar{v}_0$.

It remains to handle the case of $v|l$; we set $K = F_v$. By Lemma 
\ref{FLlemma}, we have a divisible $\cO_{M,\frakl}$-module $\cG$ over 
$\cO_K$ equipped with a principal quasi-polarization $\lambda: \cG\cong 
\cG^\vee$ such that the $G_K$ action on $\cG[\frakl]_K$ is isomorphic to 
$\rhobar^\vee$ and $\lambda$ induces our fixed 
pairing on $V_{\rhobar}^\vee$.
We can work with an 
integral model $\cY/\cO_K$ for $Y_K$, classifying tuples 
$(A,j,\alpha_{\rhobar},\alpha_E)$, where now $A/S$ ($S$ an $\cO_K$-scheme) is 
an $M$-HBAV with $\cD^{-1}$-polarization $j$ and $\alpha_{\rhobar}: A[\frakl] 
\to \cG[\frakl]$ is an 
isomorphism of vector space 
schemes, compatible with the isomorphisms $A[\frakl] \cong A[\frakl]^\vee$ 
induced by $j$ (see Remark \ref{trivialpolarizationremark}) and $\lambda: 
\cG[\frakl]\cong \cG[\frakl]^\vee$ and similarly 
for $\alpha_E$ ($E$ 
has good reduction at $l$, so $E[p]$ extends to a vector space scheme 
over $\cO_K$ equipped with a canonical isomorphism $E[p] \cong E[p]^\vee$). Now 
it suffices to show that 
$\cY(\cO_K^{\nr})$ is non-empty. In 
fact, by Greenberg's approximation theorem \cite[Corollary 
2]{greenbergapprox}, it suffices to show that $\cY(\breve{\cO})$ is non-empty, 
where 
$\breve{\cO}$ is the $l$-adic completion of $\cO_K^{\nr}$. It follows from 
  Proposition \ref{findAV} that we have a $\cD^{-1}$-polarized $M$-HBAV 
  $(A_1,j)$
  over $k = \cO_K^{\nr}/\frakl$ with $\frakl$-divisible module isomorphic to 
  $\cG_k$ and $j(1)$ inducing our fixed 
  quasi-polarization on $\cG_k$. By  
Serre--Tate deformation theory, we can lift $A_1$ 
to an Abelian scheme $\widetilde{A}_1$ over $\breve{\cO}$ equipped with a 
$\cD^{-1}$-polarization $\widetilde{j}$ and an isomorphism  
$\widetilde{A}_1[\frakl^{\infty}] \iso \cG_{\breve{\cO}}$ under which 
$\widetilde{j}$ corresponds to $\lambda$. In particular, the induced
isomorphism $\alpha_{\rhobar}:\widetilde{A}_1[\frakl] \iso 
\cG_{\breve{\cO}}[\frakl]$ is 
compatible with the (quasi-)polarizations on both sides. This gives us the
$\widetilde{A}_1, j$ and $\alpha_{\rhobar}$ we need. 
We let $\alpha_E$ be an 
isomorphism (between two trivial vector space schemes) compatible with the 
polarizations on each side. Now we have described a point of $\cY(\breve{\cO})$ 
as 
desired.

We have checked the hypotheses of Proposition \ref{moretbailly}. So we 
obtain a finite Galois totally real extension $F^+_0/F^+$, linearly disjoint 
from $L_1^{\avoid} L_2^{\avoid} L^{\suffi}$ over $F^+$ (and in particular from 
$F$, so $F_0:=F^+_0F$ is a totally imaginary quadratic extension of $F^+_0$) 
and a point 
$(A,j,\alpha_{\rhobar},\alpha_E)$ of $X(F_0^+)$ such that $A$ has good reduction 
above $\bar{v}_0lp$. Moreover, $l$ and $p$ are unramified in $F_0$. 

Finally, we set $F_1 = F^+_0L^{\suffi}F$, a CM extension of $F$ which is 
unramified above $p$ and $l$.  Since $F_1^+$ is linearly disjoint from 
$L_2^{\avoid}$ over $\Q$ and contains $L^{\suffi}$, Proposition \ref{ecpotaut} 
tells us that there is a regular algebraic conjugate self-dual cuspidal 
automorphic representation $\sigma$ of $\GL_2(\A_{F_1})$ of weight $0$ such 
that $\rbar_{\iota}(\sigma) \cong \rbar_{E,p}^\vee|_{G_{F_1}}$ for all $\iota: 
\Qpbar \iso \C$. Moreover we can assume that $\sigma$ is unramified above 
$\bar{v}_0lp$. Since $F_1$ is linearly disjoint from $L_1^{\avoid}$ over $F$, we 
have $\SL_2(\Fp) \subset \rbar_{E,p}(G_{F_1})$.  

Fixing a choice of $\iota$ and applying Theorem 
\ref{thm:main_automorphy_lifting_theorem}, we deduce 
that we have a regular algebraic cuspidal automorphic representation $\pi$ of 
$\GL_2(\A_{F_1})$, unramified at places above $pl\bar{v}_0$ and of weight $0$ 
such that $r_{A,\frakp}^\vee \cong r_\iota(\pi)$. Our choice of $\iota$ 
determines an embedding $\tau: M \hookrightarrow \C$ by composing $\iota$ with 
$M 
\hookrightarrow M_\frakp = \Qp$. We choose $\iota_l: \Qlbar \iso \C$ so that 
the embedding $\iota_l^{-1}\circ\tau$ induces the place $\frakl$, and denote 
the induced embedding $M_\frakl \hookrightarrow \Qlbar$ by 
$\iota_{M_\frakl}$.
It follows that we have $r_{\iota_l}(\pi) \cong 
\iota_{M_\frakl} \circ r_{A,\frakl}^\vee$, and we deduce the statement of the 
theorem since we have an 
isomorphism $\alpha_{\rhobar}: A[\frakl] \cong V_{\rhobar}^\vee$.

\end{proof}

\section{Local--global compatibility}
\begin{theorem}\label{thm:monodromynew}
Suppose that $F$ is a CM field and that $\pi$ is a regular
algebraic cuspidal automorphic representation of $\GL_2(\A_F)$ of
weight $0$, and let $M_\pi \subset \C$ be its coefficient field.
Let $\lambda|l$ be a prime of $M_\pi$ such that:
\begin{enumerate}
    \item $l \ge 5$, $l$ is unramified in $F$, and lies under no prime at which 
    $\pi$ is ramified.
    \item $\overline{r}_{\pi,\lambda}$ is decomposed generic, 
    $\overline{r}_{\pi,\lambda}(G_{F(\zeta_l)})$ is enormous, 
    and there is a $\sigma \in G_F - G_{F(\zeta_l)}$ such that 
    $\overline{r}_{\pi,\lambda}(\sigma)$ is scalar.
\end{enumerate}

Then, for any $\iota : \overline{M}_{\pi,\lambda} \iso \C$ and 
any finite $v\nmid l$ in $F$, we have
\[ \iota \mathrm{WD}(r_{\pi,\lambda}|_{G_{F_v}})^{\Fss} \cong 
\rec_{F_v}(\pi_v \lvert\det\rvert^{-1/2}). \] 
\end{theorem}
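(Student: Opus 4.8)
The plan is to reduce everything to Varma's local–global compatibility up to semisimplification together with the following statement, which by \cite{ilavarma} is all that remains: if $v \nmid l$ is a finite place of $F$ at which $\pi_v$ is a ramified twist of the Steinberg representation, then $r_{\pi,\lambda}|_{G_{F_v}}$ is ramified (equivalently, has nontrivial monodromy operator $N$). Indeed, after twisting we may assume $\pi_v$ is an unramified twist of Steinberg, so $\rec_{F_v}(\pi_v|\det|^{-1/2})$ has $N \neq 0$; by Varma the Frobenius-semisimplifications agree, and $N = 0$ on the Galois side would then force the two Weil--Deligne representations to be non-isomorphic by comparing monodromy. So assume for contradiction that $r_{\pi,\lambda}$ is unramified at $v$, hence so is $\overline{r}_{\pi,\lambda}$.

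First I would set up the residual representation: by hypothesis (2), $\overline{r}_{\pi,\lambda}$ is decomposed generic, $\overline{r}_{\pi,\lambda}(G_{F(\zeta_l)})$ is enormous, and there is a scalar $\sigma \in G_F - G_{F(\zeta_l)}$; also $\det \overline{r}_{\pi,\lambda} = \overline{\chi\epsilon_l^{-1}}$ for a finite order $\chi$, and by Theorem~\ref{thm:lg-at-p} the local representations at $v|l$ are crystalline with labelled Hodge--Tate weights $\{0,1\}$. After a solvable base change that is trivial at $v$ and at places above $l$, and after twisting by a finite order character (chosen so that the determinant becomes exactly $\overline{\epsilon}_l^{-1}$ — using that $l$ is unramified in $F$ and $l \geq 5$, as in the proof of Lemma~\ref{FLlemma}), I may assume $\det\overline{r}_{\pi,\lambda} = \overline{\epsilon}_l^{-1}$ and that $\pi_v$ is an unramified twist of Steinberg. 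The crystalline lifts supplied by Theorem~\ref{thm:lg-at-p} have the right Hodge--Tate weights, so $\rho := \overline{r}_{\pi,\lambda}$ satisfies all the hypotheses of Theorem~\ref{potaut}, with $\overline{v}_0$ taken to be the place of $F^+$ below $v$.

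Next I apply Theorem~\ref{potaut}, taking $F^{\avoid}$ large enough to control the relevant behaviour (e.g. to record the splitting field of $\overline{r}_{\pi,\lambda}$ and a chosen decomposed generic prime). This produces a CM extension $F_1/F$, linearly disjoint from $F^{\avoid}$ over $F$, with $l$ unramified in $F_1$, together with a weight $0$ regular algebraic cuspidal automorphic representation $\pi_1$ of $\GL_2(\A_{F_1})$ unramified above $l$ and above $\overline{v}_0$, and $\iota : \overline{\Q}_l \iso \C$ with $\overline{r}_\iota(\pi_1) \cong \overline{r}_{\pi,\lambda}|_{G_{F_1}}$. The linear disjointness ensures that $\overline{r}_{\pi,\lambda}|_{G_{F_1}}$ still has image containing a conjugate of $\SL_2(\F_l)$ (hence is absolutely irreducible, decomposed generic, enormous over $F_1(\zeta_l)$, and admits a scalar element), so all hypotheses of the automorphy lifting theorem Theorem~\ref{thm:main_automorphy_lifting_theorem} are met for $\rho = r_{\pi,\lambda}|_{G_{F_1}}$: it is de Rham crystalline with the right weights at places above $l$ by Theorem~\ref{thm:lg-at-p}, unramified almost everywhere, its residual representation matches $\overline{r}_\iota(\pi_1)$, and $\pi_1$ is unramified above $l$. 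Crucially, since we have assumed $r_{\pi,\lambda}$ is unramified at $v$, we are free to leave $v$ out of the ramification set. The automorphy lifting theorem then yields a weight $0$ cuspidal automorphic representation $\pi_2$ of $\GL_2(\A_{F_1})$ with $r_\iota(\pi_2) \cong r_{\pi,\lambda}|_{G_{F_1}}$ and, by the ``moreover'' clause, with $\pi_2$ unramified at every place of $F_1$ above $v$ (since both $\rho$ and $\pi_1$ are unramified there).

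Finally I derive the contradiction. Fix a place $w | v$ of $F_1$. On one hand, Varma's theorem (local–global compatibility up to Frobenius semisimplification) applies to $\pi$ at $v$, and $F_1/F$ being a base change we know $r_{\pi,\lambda}|_{G_{F_{1,w}}}$ up to semisimplification: its Frobenius eigenvalues are $\{q_w^{1/2}\alpha, q_w^{-1/2}\alpha\}$ for a Weil number $\alpha$, i.e. their ratio is $q_w$, where $q_w$ is the residue cardinality of $w$ — because $\pi_v$ is an unramified twist of Steinberg. On the other hand, $r_\iota(\pi_2)|_{G_{F_{1,w}}} \cong r_{\pi,\lambda}|_{G_{F_{1,w}}}$ is unramified (as $\pi_2$ is unramified at $w$ and by the characterizing property of $r_\iota(\pi_2)$ at such places, or directly since we built it unramified at $w$), so $\iota\mathrm{WD}(r_\iota(\pi_2)|_{G_{F_{1,w}}})^{\Fss} \cong \rec_{F_{1,w}}(\pi_{2,w}|\det|^{-1/2})$ with $\pi_{2,w}$ an unramified principal series. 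But an unramified twist of Steinberg and an unramified (irreducible) principal series cannot have Satake/Frobenius parameters with eigenvalue ratio $q_w$ while being tempered — more precisely, if $\pi_{2,w}$ is an unramified principal series $\mathrm{Ind}(\mu_1 \times \mu_2)$ then $r_\iota(\pi_2)(\Frob_w)$ has eigenvalues $\mu_1(\varpi_w), \mu_2(\varpi_w)$ with the same ratio $q_w$, forcing $\pi_{2,w}$ to be a non-unitary (non-tempered) and in fact reducible principal series, contradicting the genericity (generic subquotients of such induced representations are not the full induced representation when it is reducible) of the cuspidal $\pi_2$. This is the contradiction, and it is the step I expect to require the most care: one must phrase the ``eigenvalue ratio $q_w$'' obstruction cleanly — the cleanest route is that genericity of $\pi_{2,w}$ forces $\pi_{2,w}$ to be an irreducible (generic) induced representation, but $\mathrm{Ind}(\mu_1 \times \mu_2)$ with $\mu_1\mu_2^{-1} = |\cdot|^{\pm 1}$ is reducible, a contradiction; hence $r_{\pi,\lambda}$ must be ramified at $v$, which is what we needed.
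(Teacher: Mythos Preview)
Your overall strategy is exactly the paper's: reduce via Varma to showing $N \ne 0$ at places where $\pi$ is special, arrange that $\pi_v$ is an unramified twist of Steinberg, assume $N = 0$ for contradiction, apply Theorem~\ref{potaut} to the residual representation, lift via Theorem~\ref{thm:main_automorphy_lifting_theorem} over the resulting $F_1$ to obtain $\Pi$ unramified above $v$, and contradict genericity of $\Pi_w$ via the Frobenius eigenvalue ratio $q_w$.

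There is, however, a gap in your verification of the hypotheses of Theorem~\ref{thm:main_automorphy_lifting_theorem} over $F_1$. You assert that linear disjointness forces $\overline{r}_{\pi,\lambda}(G_{F_1}) \supseteq \SL_2(\F_l)$ and deduce decomposed genericity from that, but $\SL_2$-containment is \emph{not} among the hypotheses of Theorem~\ref{thm:monodromynew}: only the weaker package (decomposed generic, enormous over $F(\zeta_l)$, scalar element) is assumed, and these do not force the image to contain $\SL_2(\F_l)$. Your parenthetical about recording ``a chosen decomposed generic prime'' in $F^{\avoid}$ is the right instinct but must actually be executed. The paper fixes a decomposed generic prime $p$ at the outset, arranges in the preliminary solvable base change that $p$ becomes totally split, takes $F^{\avoid}$ to be the Galois closure over $\Q$ of $\overline{F}^{\ker(\overline{r}_{\pi,\lambda})}(\zeta_l)$, and then runs a Chebotarev argument in $\Gal(F^{\avoid}\widetilde{F}_1/\widetilde{F}) \cong \Gal(F^{\avoid}/\widetilde{F}) \times \Gal(\widetilde{F}_1/\widetilde{F})$ (tildes denoting Galois closures over $\Q$) to find a prime $q$ with $\Frob_q = (\Frob_p, 1)$; this $q$ is then decomposed generic for $\overline{r}_{\pi,\lambda}|_{G_{F_1}}$. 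A related issue: your initial reduction to the unramified-twist-of-Steinberg case proceeds by \emph{twisting} $\pi$ by a global Hecke character, which can disturb the enormous and decomposed-generic conditions on the residual image, and you do not check they survive. The paper instead uses a solvable CM base change disjoint from $\overline{F}^{\ker(\overline{r}_{\pi,\lambda})}$, with $l$ unramified and $p$ totally split, which transparently preserves both the residual image and the decomposed-generic prime.
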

\begin{proof}
Fix a prime $p \ne l$ for which $\rbar_{\pi,\lambda}$ is decomposed generic. 
By the main result of \cite{ilavarma}, to prove the theorem it suffices to 
show that if $v\nmid l$ is a finite place at which $\pi$ is special, then $r_{\pi,\lambda}$ has nontrivial monodromy at $v$.
Fix $\iota : \overline{M}_{\pi,\lambda} \iso \C$ and 
let $N$ be the monodromy operator 
for $\mathrm{WD}(r_{\pi,\lambda}|_{G_{F_v}})^{\Fss}$. 
To show $N \ne 0$, it suffices to do so after restriction to any finite 
extension. 
In particular, making a solvable base change that is disjoint from 
$\overline{F}^{\ker(\overline{r}_{\pi,\lambda})}$ in which $l$ is unramified and 
$p$ is totally split, we may assume that 
\begin{itemize}
\item $\pi_v$ is an unramified twist of the Steinberg representation, 
\item $\overline{r}_{\pi,\lambda}$ is unramified at $v$ and $v^c$.
\end{itemize}

Now assume for a contradiction that $N = 0$. 
Then the main result of \cite{ilavarma} implies that 
$r_{\pi,\lambda}|_{G_{F_v}} \cong \chi \oplus \chi\epsilon_l$ 
for an unramified character $\chi : G_{F_v} \to \overline{M}_{\pi,\lambda}^\times$. 
Now we apply Theorem \ref{potaut} with $F^{\mathrm{avoid}}$ equal to the Galois 
closure of $\overline{F}^{\ker(\overline{r}_{\pi,\lambda})}(\zeta_l)/\Q$, to 
obtain a CM Galois extension $F_1/F$, 
linearly disjoint from $F^{\mathrm{avoid}}$ over $F$ and with $l$ 
unramified in $F_1$, such that $\overline{r}_{\pi,\lambda}|_{G_{F_1}}$ is 
automorphic (coming from a weight 0, unramified above $v$ and $l$, 
automorphic representation). 
We now wish to apply Theorem~\ref{thm:main_automorphy_lifting_theorem}. 
By our choice of $F^{\mathrm{avoid}}$, it is easy to see that 
$\overline{r}_{\pi,\lambda}(G_{F_1(\zeta_l)})$ is enormous and 
that there is $\sigma \in G_{F_1} - G_{F_1(\zeta_l)}$ such that 
$\overline{r}_{\pi,\lambda}(\sigma)$ is scalar. 
We claim that $\overline{r}_{\pi,\lambda}|_{G_{F_1}}$ is also decomposed 
generic. 

Let $\widetilde{F}$ and $\widetilde{F}_1$ be the Galois closures of 
$F/\Q$ and $F_1/\Q$, respectively.  
Since $F^\avoid/\Q$ is Galois and $F^\avoid \cap F_1 = F$, 
we have $F^\avoid \cap \widetilde{F}_1 = \widetilde{F}$.
Since $p$ is totally split in $F$, it is totally split in $\widetilde{F}$ and 
the conjugacy class of $\Frob_p$ in $\Gal(F^\avoid/\Q)$ lies in $\Gal(F^\avoid/\widetilde{F})$. 
Using Chebotarev density, we choose a prime $q$ unramified in $F^\avoid\widetilde{F}_1$ 
such that $\Frob_q \in \Gal(F^\avoid\widetilde{F}_1/\Q)$ lies in 
$\Gal(F^\avoid\widetilde{F}_1/\widetilde{F})$ and corresponds to $\Frob_p \times 1$ under the isomorphism 
\[ \Gal(F^\avoid\widetilde{F}_1/\widetilde{F}) \cong 
\Gal(F^\avoid/\widetilde{F}) \times \Gal(\widetilde{F}_1/\widetilde{F}). \]
This $q$ is decomposed generic for 
$\overline{r}_{\pi,\lambda}|_{G_{F_1}}$.

By Theorem~\ref{thm:lg-at-p}, $r_{\pi,\lambda}|_{G_{F_1}}$ is crystalline 
with all labelled Hodge--Tate weights equal to $\{0,1\}$ 
at all places above $l$ in $F_1$.  
The representation $r_{\pi,\lambda}|_{G_{F_1}}$ thus satisfies the assumptions of 
Theorem~\ref{thm:main_automorphy_lifting_theorem}, 
and we obtain a regular algebraic cuspidal automorphic representation $\Pi$ of 
$\GL_2(\A_{F_1})$ such that 
$r_{\pi,\lambda}|_{G_{F_1}} \cong r_{\iota}(\Pi)$ 
and with $\Pi_w$ unramified at all $w|v$ in $F_1$.
Then for any $w|v$ in $F_1$, $ r_{\iota}(\Pi)|_{G_{F_{1,w}}} \cong \chi|_{G_{F_{1,w}}} \oplus 
\chi|_{G_{F_{1,w}}}\epsilon_l$, and $\Pi_w$ is an unramified principal series. 
By local-global compatibility at unramified places \cite{hltt,ilavarma}, 
this contradicts the genericity of $\Pi$.
\end{proof}

\begin{proof}[Proof of Theorem~\ref{theorem_main_from_intro}]
If $\pi$ is everywhere potentially unramified, then this follows from the main result of \cite{ilavarma}, 
so we can assume that $\pi$ is special at some finite place of $F$. 
Theorem \ref{theorem_main_from_intro} then follows at once from Theorem~\ref{thm:monodromynew} and Lemma~\ref{lemma:bad_set}.
\end{proof}

\emergencystretch=3em

\bibliographystyle{amsalpha}
\bibliography{CMpatching}

\end{document}